\newcounter{master}
\numberwithin{master}{section}
\theoremstyle{plain}
\newtheorem{theorem}[master]{Theorem}
\newtheorem{question}[master]{Question}
\newtheorem{lemma}[master]{Lemma}
\theoremstyle{definition}
\newtheorem{definition}[master]{Definition}
\newtheorem{claim}[master]{Claim}
\newtheorem*{notation}{Notation}
\theoremstyle{remark}
\newtheorem{remark}[master]{Remark}
\let\c@equation\c@master
\let\c@figure\c@master
\let\c@table\c@master
\def\@cite#1#2{\textup{[\textbf{#1}\if@tempswa , #2\fi]}}
\def\@biblabel#1{[\textbf{#1}]}
\def\nth#1{
  \expandafter\nthM #1\relax
  \nthscript{%
  \ifnum0<0#1 
  \ifnum#1\nthtest0 th\else 
  \expandafter \nthSuff \expandafter 0\number\ifnum #1<0-\fi#1\delimiter
  \fi
  \else th\fi 
 }}
\def\nthM#1{\ifmmode#1\else$#1$\fi}
\DeclareMathOperator{\aff}{af{}f}
\DeclareMathOperator{\codim}{codim}
\begin{document}

\title{On star-convex bodies with rotationally invariant sections}

\author[B. Zawalski]{Bartłomiej Zawalski}
\address{Polish Academy of Sciences, Institute of Mathematics}
\email{b.zawalski@impan.pl}
\thanks{This work was supported by the Polish National Science Center grants no. 2015/18/A/ST1/00553 and 2020/02/Y/ST1/00072.}
\subjclass[2010]{Primary 53A07; Secondary 12D10, 53A15}
\keywords{geometric tomography, hyperplanar section, body of affine revolution, quadratic surface}

\begin{abstract}
We will prove that an origin-symmetric star-convex body $K$ with sufficiently smooth boundary and such that every hyperplane section of $K$ passing through the origin is a~body of affine revolution, is itself a~body of affine revolution. This will give a~positive answer to the recent question asked by G.~Bor, L.~Hern\'andez-Lamoneda, V.~Jim\'enez de Santiago, and L.~Montejano-Peimbert, though with slightly different prerequisites.
\end{abstract}

\maketitle

\paragraph{\bf Acknowledgement}

This version of the article has been accepted for publication, after peer review, but is not the Version of Record and does not reflect post-acceptance improvements, or any corrections. The Version of Record is available online at: \url{http://dx.doi.org/10.1007/s13366-023-00702-1}.

\tableofcontents

\section{Introduction}

After more than five decades since the seminal works of H.~Auerbach, S.~Mazur and S.~Ulam \cite{Auerbach1935}, A.~Dvoretzky \cite{10.2307/90172}, M.~Gromov \cite{gromov1967geometrical} and V.~Milman \cite{milman1971new}, the isometric conjecture of S.~Banach again attracted the attention of researchers, launching a~whole avalanche of papers by L.~Montejano et al. \cite{10.2140/gt.2021.25.2621,article2,article1,arxiv.2108.08917} and recently also by S.~Ivanov, D.~Mamaev and A.~Nordskova \cite{Ivanov2023}. As it was already known that algebraic topology alone would not suffice, more sophisticated methods were developed. For instance, in \cite{10.2140/gt.2021.25.2621} the authors (G.~Bor, L.~Hern\'andez-Lamoneda, V.~Jim\'enez de Santiago and L.~Montejano-Peimbert) showed that under assumptions of the conjecture (namely, that $K$ is a~symmetric convex body, all of whose hyperplanar sections are affinely equivalent) supplemented with dimension constraints having its origins in algebraic topology, all the hyperplanar sections of $K$ must be bodies of affine revolution (cf. \thref{def:02}). This observation prompted them to ask the following, somewhat more general question:

\begin{question}[{cf. \cite[Remark~2.9]{10.2140/gt.2021.25.2621}}]\thlabel{qn:01}
Let $K\subset\mathbb R^n$, $n\geq 4$, be a~convex body containing the origin $O$ in its interior. If every hyperplane section of $K$ passing through $O$ is a~body of affine revolution, is $K$ necessarily a~body of affine revolution?
\end{question}

\noindent Note that the reverse implication is quite straightforward (cf. \cite[Lemma 2.4]{10.2140/gt.2021.25.2621}). Moreover, the authors proved in \cite[Theorem 1.4]{10.2140/gt.2021.25.2621} that at least one hyperplane section of such a~symmetric convex body must be an ellipsoid, which is an obvious necessary condition. Compared to the initial problem of S. Banach, they decided to keep the assumption that $K$ is convex while forgoing the assumption that $K$ is symmetric. In what follows, we will prove a~theorem in the same spirit, but with slightly different prerequisites:

\begin{theorem}\thlabel{thm:01}
Let $K\subset\mathbb R^n$, $n\geq 4$, be an origin-symmetric star-convex body. Assume that the boundary $\partial K$ is a~submanifold of class $C^3$. If every hyperplane section of $K$ passing through the origin is a~body of affine revolution, then $K$ itself is a~body of affine revolution.
\end{theorem}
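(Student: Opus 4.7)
The plan is to identify a candidate axis line $\ell \subset \mathbb R^n$ through $O$ and realize $K$ as invariant under a one-parameter family of affine transformations fixing $\ell$ pointwise. I would organize the argument into three steps: constructing a smooth axis field on the central hyperplane sections, proving that all these sectional axes collapse onto a single ambient line, and gluing the sectional symmetry groups into an ambient affine revolution.

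For each hyperplane $H$ through $O$ such that $K \cap H$ is not an ellipsoid, the body of affine revolution $K \cap H$ has a uniquely determined axis $\ell_H \subset H$ passing through $O$ by origin-symmetry. The $C^3$ regularity of $\partial K$, combined with an implicit-function argument applied to the algebraic relations characterizing the axis of an affine revolution body, should make $H \mapsto \ell_H$ smooth on the open subset $\mathcal U \subset \mathrm{Gr}(n-1,n)$ where it is defined. If $\mathcal U$ is empty, then every central hyperplane section of $K$ is an ellipsoid, and a standard rigidity result forces $K$ itself to be an ellipsoid, in particular a body of affine revolution; I may therefore assume $\mathcal U$ is a dense open subset of the Grassmannian.

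The crux of the proof --- and the main obstacle I anticipate --- is showing that the lines $\{\ell_H\}_{H \in \mathcal U}$ emanate from a single ambient line $\ell$. I would analyze pairs $H, H' \in \mathcal U$ with $W := H \cap H'$ of codimension $2$, noting that $K \cap W$ sits inside both affine revolution bodies $K \cap H$ and $K \cap H'$; the two induced affine structures must be compatible on $K \cap W$, either identifying their traces as a common axis in $W$ or forcing $K \cap W$ itself to be an ellipsoid. Letting $H'$ vary in a pencil about a fixed $(n-3)$-subspace of $H$, and exploiting that $n \geq 4$ makes such pencils sufficiently rich, one should extract enough independent rigidity conditions to pin the sectional axes down along a single $\ell$. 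Once $\ell$ is identified, the intrinsic one-parameter groups $\Phi^H_t$ of affine symmetries of the sections $K \cap H$ for $H \supset \ell$ are pairwise compatible on overlaps $H \cap H' \supset \ell$, and glue into an ambient one-parameter group of affine transformations preserving $K$, realizing $K$ as a body of affine revolution around $\ell$. The delicate point throughout is controlling sections that happen to be ellipsoids (where $\ell_H$ is ambiguous) and umbilic-like directions on $\partial K$; handling these is where the $C^3$ regularity and the dimension hypothesis $n \geq 4$ should both play an essential role.
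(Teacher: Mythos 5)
There is a genuine gap here, and in fact the central target of your Step~2 is misstated. If $K$ is a body of affine revolution with axis $\ell$ through $O$, then a hyperplane section $K\cap H$ with $\ell\not\subset H$ is still a body of affine revolution, but its axis $\ell_H$ lies in $H$ and therefore \emph{cannot} equal $\ell$; the sectional axes of a genuine body of revolution do not ``collapse onto a single ambient line,'' so the statement you propose to prove is false even for the model case you are trying to reach. The object that is actually coherent across sections is the hyperplane of revolution: one must show that the $(n-2)$-dimensional hyperplanes of revolution of the sections $K\cap H$ are the traces $H\cap W$ of a single ambient hyperplane $W$. This is exactly what the paper establishes, and it is the hard part: it requires a local differential-geometric analysis at a point $p\in\partial K$ of positive definite second fundamental form, showing first that the cubic form $c_f$ of the canonical parametrization is reducible (\thref{lem:01}, proved by Bertini's theorem for $n\geq 5$ and by a Stiefel--Whitney/vector-field obstruction on $\mathrm{Gr}(2,T_pM^3)$ for $n=4$), and then that an open set of sections share a common hyperplane of revolution. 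Your proposal contains no mechanism that could play this role; ``extract enough independent rigidity conditions to pin the sectional axes down'' is precisely the step that needs an idea, and the compatibility of two sections on $W=H\cap H'$ does not by itself force anything (the two induced $\mathrm O(n-3)$-structures on $K\cap W$ can a priori have different hyperplanes of revolution, and $K\cap W$ need not be an ellipsoid).

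Two further points would also need repair. First, the gluing step is not routine: the sectional symmetry groups are groups of affine maps of the hyperplanes $H$, not of $\mathbb R^n$, and producing an ambient $\mathrm O(n-1,\mathbb R)$ from them is a separate argument; the paper does this by composing reflections coming from two different sections to generate small rotations (\S 3), and then invokes \thref{lem:07} to upgrade affine $2$-revolution to affine $1$-revolution. Second, your dichotomy ``$\mathcal U$ empty or dense'' is unjustified (the set of $H$ with $K\cap H$ an ellipsoid is closed but may have nonempty interior without being everything), and the degenerate case is handled in the paper not by a ``standard rigidity result'' but by showing the affine cubic form $C$ vanishes and applying the Maschke--Pick--Berwald theorem (\thref{lem:06}) to conclude $\partial K$ is a quadric. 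The smoothness of $H\mapsto\ell_H$ via an implicit-function argument is also unsubstantiated (there is no finite algebraic system characterizing the axis of a general, non-algebraic body of affine revolution); the paper only ever uses continuity of this assignment.
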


Our argument is rather elementary. It is built mainly upon the tools of differential geometry and linear algebra. Although occasionally we will need to use some more involved facts from other fields like algebraic topology or commutative algebra, they will hide most of the difficulty within themselves. Unlike in \cite{10.2140/gt.2021.25.2621}, we forgo the assumption that $K$ is convex while keeping the assumption that $K$ is symmetric. Moreover, to apply our method we need the boundary of $K$ to be sufficiently smooth. Presumably, the superfluous symmetry assumption can be disposed of, but this will significantly complicate any proof along our lines and most likely it will also lose its nice geometric flavor to the intensive computation of general affine differential invariants (cf. \thref{rem:01}). The smoothness assumption seems to be an inherent element of our argument and therefore can not be easily relaxed.\\

A natural question arises if the assumption $n\geq 4$ is indeed necessary. A compact domain $L\subseteq\mathbb R^{n-1}$ is a~body of affine revolution if its symmetry group contains a~subgroup affinely conjugated to $\mathrm O(n-2,\mathbb R)$ (cf. \thref{def:02}). In dimension $n=3$, \thref{qn:01} has a~different flavor because we assume merely that every planar section of $K$ passing through the origin admits an affine reflection, which is satisfied e.g. when $K$ is a~cube (every central planar section of a~cube is affinely equivalent to either a~square or a~regular hexagon, both of which are axially symmetric). Therefore the statement is no longer true unless we make some additional assumptions (see e.g. \cite[\S 2]{article3}). The right counterpart of \thref{qn:01} in dimension $3$ seems to be an affine version of a~similar question asked by K.~Bezdek:

\begin{question}[{cf. \cite[\S1.4]{Odor1999}}]\thlabel{qn:02}
Let $K\subset\mathbb R^3$ be a~convex body. If every planar section of $K$ [not necessarily passing through the origin ed.] admits an affine reflection, is $K$ necessarily a~body of affine revolution?
\end{question}

\noindent T.~\'Odor claimed to have confirmed Bezdek's conjecture, but unfortunately, his approach was found incomplete. To the author's best knowledge, the problem remains open. Nevertheless, techniques similar to those presented in this paper may be applied also to \thref{qn:02}, but then they will most likely require higher-order smoothness of the boundary.

\section{Definitions and basic concepts}

We adopt the notation from \cite{10.2140/gt.2021.25.2621}.

\begin{definition}
A compact domain $K\subset\mathbb R^n$, $n\geq 1$, is called \emph{star-convex} if there exists $O\in K$ such that for every $x\in K$ the entire line segment from $O$ to $x$ is contained in $K$. A star-convex body is called \emph{symmetric} if it is centrally symmetric with respect to $O$.
\end{definition}

\begin{remark}
Actually, the same proof of \thref{thm:01} with minor technical improvements works for general compact domains. However, we will intentionally refrain from these topological considerations, so as not to overshadow the main idea.
\end{remark}

\begin{definition}\thlabel{def:02}
A compact domain $K\subset\mathbb R^n$, $n\geq 1$, is called \emph{a body of affine $k$-revolution} if its symmetry group contains a~subgroup $G$ affinely conjugated to $\mathrm O(n-k,\mathbb R)$, $0<k<n$. The ambient space $\mathbb R^n$ can be viewed as a~direct sum $H\oplus L$ of a~linear space $H$ and an affine space $L$, where $H$ (called \emph{the hyperplane of affine revolution}) is an irreducible representation space of $G$ of dimension $n-k$ and $L$ (called \emph{the hyperaxis of affine revolution}) is a~common fixed point subspace of $G$ of dimension $k$. By body (resp. hyperplane, axis) of affine revolution, we will mean a~body (resp. hyperplane, axis) of affine $1$-revolution unless expressly stated otherwise.
\end{definition}

\begin{remark}
If we additionally assume that $K$ is symmetric, then the center of symmetry $O$ must be a~fixed point of any affine symmetry of $K$. In particular, if $K$ is a~star-convex body of affine revolution, the axis of affine revolution must pass through $O$. Moreover, since every section of $K$ with a~hyperplane passing through $O$ is again a~star-convex body of affine revolution symmetric with respect to $O$, the axis of affine revolution of all such hyperplanar sections must likewise pass through $O$.
\end{remark}

\begin{remark}
Note that all these objects are defined (and will be used) in a~general affine setting. In particular, the symmetry group of $K$ is a~compact subgroup of $\mathrm{GL}(n,\mathbb R)$, but not necessarily of $\mathrm O(n,\mathbb R)$.
\end{remark}

Denote the submanifold $\partial K$ by $M^{n-1}$. Let $p\in M^{n-1}$ be any point with positive definite second fundamental form of $M^{n-1}$. After applying a~suitable affine map we may assume that $p=\boldsymbol 0_{\mathbb R^n}$, $O=\boldsymbol 0_{\mathbb R^n}+\hat{\boldsymbol e}_n$ and $T_pM=\boldsymbol 0_{\mathbb R^n}+\langle\hat{\boldsymbol e}_n\rangle^\perp$, where $\hat{\boldsymbol e}_{n}$ stands for the \nth{n} standard unit vector (fig. \ref{fig:01}). In this coordinate system, we represent the neighborhood of $p$ in $M^{n-1}$ as a~graph of some function $f:T_pM^{n-1}\supset U\to\mathbb R$ of class $C^3$, which must be of the form
$$f(\boldsymbol x)=O(\|\boldsymbol x\|)^2$$
in Big-O notation. Since we assumed that the second fundamental form of $M^{n-1}$ is positive definite at $p$, after applying a~suitable linear change of coordinates in the domain we may further assume that
$$f(\boldsymbol x)=\frac{1}{2}\langle\boldsymbol x,\boldsymbol x\rangle+O(\|\boldsymbol x\|)^3.$$
The above will be called the \emph{canonical parametrization} of $M^{n-1}$ at $p$. Note that it is unique up to an orthogonal change of coordinates in the domain. Moreover, observe that the restriction of $f$ to any codimension $1$ hyperplane $H\in\mathrm{Gr}(n-2,T_pM^{n-1})$ is the canonical parametrization of the hyperplanar section $M^{n-1}\cap\aff(\{H,O\})$ at $p$.

\begin{figure}[h]
\includegraphics{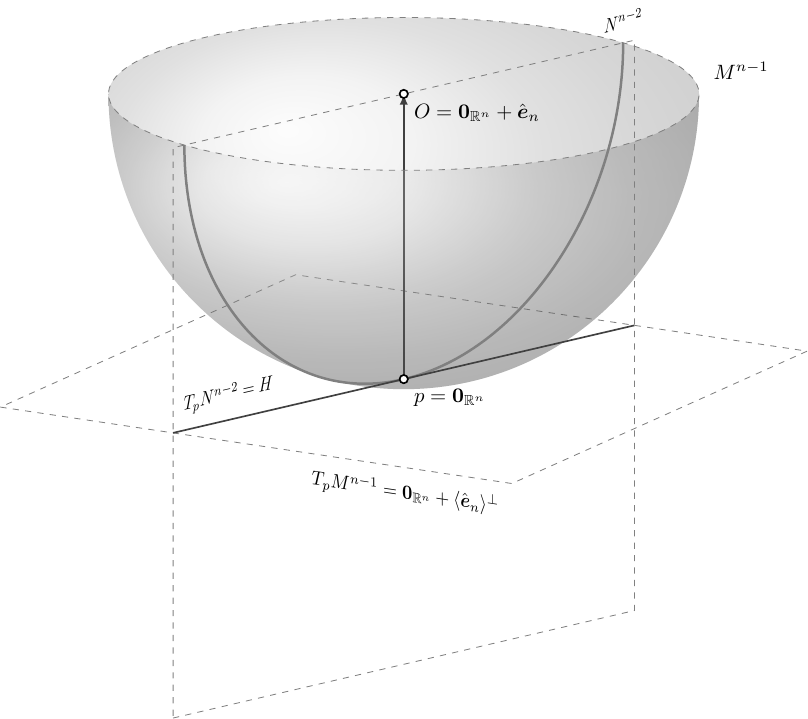}
\caption{The canonical parametrization of $M^{n-1}$ at $p$}
\label{fig:01}
\end{figure}

\begin{definition}
Let $f:\mathbb R^m\to\mathbb R$ be a~function of class $C^3$. The homogeneous part of degree $3$ of its series expansion is called the \emph{cubic form} of $f$ and will be denoted by $c_f$.
\end{definition}

\begin{remark}
In the course of the proof, we will consider almost exclusively points with positive definite second fundamental form of $M^{n-1}$. However, we do not need to assume that $M^{n-1}$ is strongly convex. Indeed, every compact hypersurface contains at least one such point, from which all the local properties will eventually spill over the entire set.
\end{remark}

\section{Hypersurfaces of affine revolution}\label{sec:01}

Although the original hypersurface is $(n-1)$-dimensional, most of the time we will be investigating $(n-2)$-dimensional hypersurfaces of affine revolution since their geometry plays a~key role in the proof. Let
$$g:T_pN^{n-2}\supset U\to\mathbb R,\quad g(\boldsymbol x)=\frac{1}{2}\langle\boldsymbol x,\boldsymbol x\rangle+O(\|\boldsymbol x\|)^3$$
be the canonical parametrization of some hyperplanar section $N^{n-2}$ of $M^{n-1}$ at $p$ (fig. \ref{fig:01}), being a~hypersurface of affine revolution. From now henceforth, by action of a~linear group we always mean the action of its affine matrix representation on a~specified affine subspace of $\mathbb R^n$, usually clear from the context. By definition, $N^{n-2}$ is invariant under action of $\mathrm O(n-2,\mathbb R)$. Denote by $G_p$ the \emph{isotropy group} of $p$, i.e. the set of affine symmetries of $N^{n-2}$ which does not change $p$. If $p$ is already a~fixed point of $\mathrm O(n-2,\mathbb R)$ then $G_p$ is affinely conjugated to $\mathrm O(n-2,\mathbb R)$, otherwise $G_p$ is affinely conjugated to $\mathrm O(n-3,\mathbb R)$. Without loss of generality, we may choose $U$ to be invariant under $G_p$.\\

\begin{figure}[h]
\includegraphics{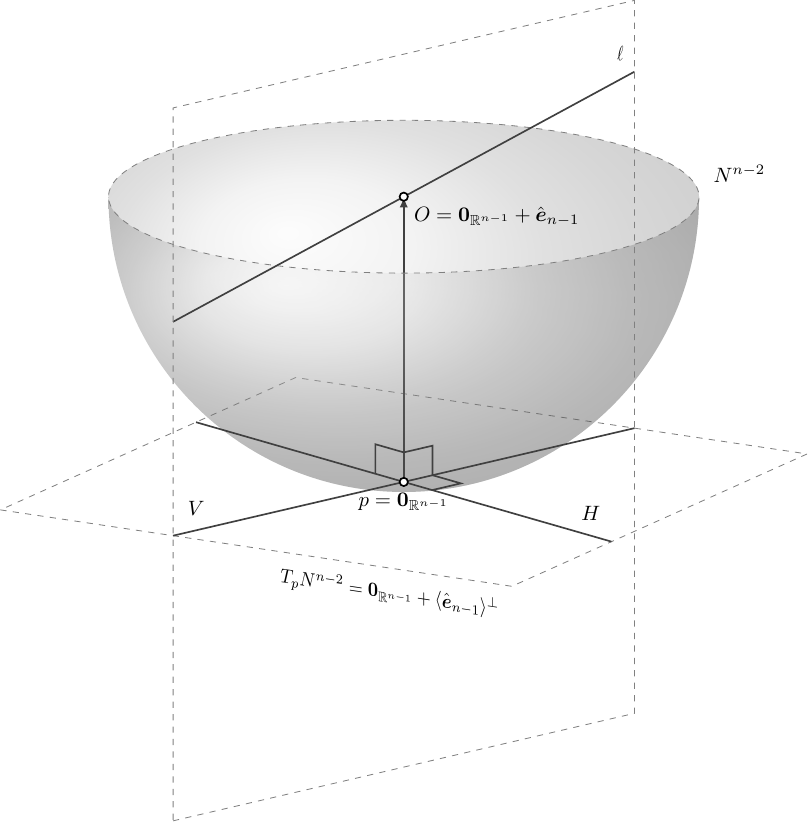}
\caption{The canonical parametrization of $N^{n-2}$ at $p$}
\label{fig:02}
\end{figure}

Let $A\in G_p$ be any affine symmetry of $N^{n-2}$ which does not change $p$. Note that in our coordinate system, $A$ may be regarded as a~linear map. Since $O$ is the center of symmetry of $N^{n-2}$, it must be a~fixed point of $A$. Thus $\hat{\boldsymbol e}_{n-1}$ is an eigenvector of $A$ with eigenvalue $+1$. Moreover, the hyperplane $\langle\hat{\boldsymbol e}_{n-1}\rangle^\perp$ it tangent to $N^{n-2}$ at $p$ and thus it must be an invariant subspace of $A$. It follows that the matrix representation of $A$ in our canonical coordinate system is of the form
$$[A]=\begin{pmatrix} [B] & \begin{matrix}0\\0\\\vdots\\0\end{matrix} \\ \begin{matrix}0&0&\cdots&0\end{matrix} & 1 \end{pmatrix}$$
for some $B\in\mathrm{GL}(n-2,\mathbb R)$. Now, for every point $\boldsymbol x\in U$ there exists a~point $\tilde{\boldsymbol x}\in U$ such that
$$[A]\mathbin{.}\begin{pmatrix}\boldsymbol x\\g(\boldsymbol x)\end{pmatrix}=\begin{pmatrix}\tilde{\boldsymbol x}\\g(\tilde{\boldsymbol x})\end{pmatrix},$$
which reads
\begin{equation}\label{eq:04}g(B\boldsymbol x)=g(\boldsymbol x).\end{equation}
In particular, $[B]$ must preserve the standard quadratic form, in which case it is an orthogonal matrix. Thus $A$ itself must be an orthogonal map, which means that in our coordinate system, $G_p$ is actually a~subgroup of $\mathrm O(n-2,\mathbb R)$.

\begin{claim}\thlabel{lem:05}
It follows immediately from \eqref{eq:04} that the canonical parametrization $g$ is invariant under action of $G_p$ on its domain $U\subset T_pN^{n-2}$, i.e. $g\circ A\vert_U\equiv g$ for every $A\in G_p$.\hfill$\square$
\end{claim}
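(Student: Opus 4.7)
The plan is to observe that the claim is an unwinding of equation \eqref{eq:04}, once one correctly identifies the action of $A\in G_p$ on $T_pN^{n-2}$. First I would invoke the block form of $[A]$ derived immediately before \eqref{eq:04}: because the last row and column are $(0,\ldots,0,1)$, the map $A$ fixes $\hat{\boldsymbol e}_{n-1}$ (in particular it fixes both $O$ and $p$) and its restriction to the hyperplane $\langle\hat{\boldsymbol e}_{n-1}\rangle^\perp$ is precisely the linear map $B$. Under the identification $T_pN^{n-2}=\boldsymbol 0_{\mathbb R^{n-1}}+\langle\hat{\boldsymbol e}_{n-1}\rangle^\perp$ provided by the base point $p=\boldsymbol 0_{\mathbb R^{n-1}}$, this gives $A\vert_{T_pN^{n-2}}=B$.

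Next I would invoke the explicit remark preceding the claim that $U$ was chosen $G_p$-invariant, so that $B(U)\subseteq U$ and the composition $g\circ A\vert_U$ is well-defined as a map $U\to\mathbb R$. Equation \eqref{eq:04}, which reads $g(B\boldsymbol x)=g(\boldsymbol x)$ for every $\boldsymbol x\in U$, then translates verbatim into $(g\circ A\vert_U)(\boldsymbol x)=g(\boldsymbol x)$. Since $A\in G_p$ was arbitrary, this yields $g\circ A\vert_U\equiv g$ for every $A\in G_p$, as claimed.

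There is no genuine obstacle here: the text signals this by the word \emph{immediately} and by the $\square$ terminating the claim. All the real content --- the orthogonality of $B$ and the block form of $[A]$ --- is already packaged in the paragraph preceding \eqref{eq:04}, and the $G_p$-invariance of $U$ was arranged by hand. The argument is therefore a short reading-off step rather than a new computation.
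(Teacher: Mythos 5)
Your proposal is correct and matches the paper's intent exactly: the claim is stated with no proof beyond the word \emph{immediately}, because equation \eqref{eq:04} already \emph{is} the assertion once $A\vert_{T_pN^{n-2}}$ is identified with $B$ via the block form of $[A]$ and the $G_p$-invariance of $U$ guarantees the composition is defined. There is nothing further to add.
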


\begin{claim}\thlabel{lem:03}
The tangent space $T_pN^{n-2}$ may be viewed as a~$(n-2)$-dimensional representation space of $G_p$. If $G_p\simeq\mathrm O(n-3,\mathbb R)$, then $T_pN^{n-2}$ admits an orthogonal decomposition $H\oplus V$ into irreducible representations' spaces, where $H$ is a~codimension $1$ hyperplane of revolution and $V$ is a~dimension $1$ common fixed-point subspace (fig. \ref{fig:02}). In particular, the cubic form $c_g$ vanishes on $H$. Indeed, $c_g\vert_H$ must vanish at some direction and by \thref{lem:05} this carries over to all the other directions as well. On the other hand, if $G_p\simeq\mathrm O(n-2,\mathbb R)$, then $T_pN^{n-2}$ is already an irreducible representation's space of $G_p$ and thus $c_g$ vanishes identically (again, by the very same argument). The latter is necessarily the case when $N^{n-2}$ is an ellipsoid.\hfill$\square$
\end{claim}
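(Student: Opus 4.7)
The plan is to combine the block-matrix realization of $G_p\subset\mathrm{O}(n-2,\mathbb{R})$ already derived in the preceding discussion with a short parity argument showing that any cubic form invariant under an orthogonal action containing $-\mathrm{id}$ must vanish.

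First, the block form $[A]=\mathrm{diag}([B],1)$ established immediately before the claim realizes $G_p$ as a subgroup of $\mathrm{O}(n-2,\mathbb{R})$ acting faithfully on $T_pN^{n-2}$. Since $G_p$ is affinely conjugated to either $\mathrm{O}(n-2,\mathbb{R})$ or $\mathrm{O}(n-3,\mathbb{R})$, in the first case it must in fact coincide with the full orthogonal group (matching of dimensions of compact orthogonal subgroups) and so $T_pN^{n-2}$ is the standard, irreducible representation of $\mathrm{O}(n-2,\mathbb{R})$. In the second case, any orthogonal embedding of $\mathrm{O}(n-3)$ into $\mathrm{O}(n-2)$ is conjugate to the block-diagonal embedding $B\mapsto\mathrm{diag}(B,1)$, which yields the desired decomposition $T_pN^{n-2}=H\oplus V$ with $H$ the hyperplane of revolution (standard representation) and $V$ a common fixed-point line.

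Second, by Claim \thref{lem:05} the cubic form $c_g$ inherits the $G_p$-invariance of $g$. Following the hint built into the claim, $G_p$ acts transitively on the unit sphere of $H$, so $c_g|_H$ is constant on each sphere centered at the origin; by cubic homogeneity it therefore suffices to exhibit a single direction in $H$ on which $c_g$ vanishes. But $-\mathrm{id}_H$ lies in the image of $G_p$ (it corresponds to $-I_{n-3}\in\mathrm{O}(n-3)$ under the decomposition), so invariance gives $c_g(x)=c_g(-x)$ while cubic homogeneity gives $c_g(-x)=-c_g(x)$, forcing $c_g|_H\equiv 0$. The very same parity argument applied to the whole of $T_pN^{n-2}$ in the $\mathrm{O}(n-2)$ case gives $c_g\equiv 0$ identically, which recovers the well-known vanishing of the cubic form for ellipsoids.

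The only genuine subtlety is verifying that in the $\mathrm{O}(n-3)$ case the $1$-dimensional summand $V$ is \emph{pointwise} fixed (rather than carrying a sign representation, which would yield only a weaker conclusion about $c_g$). This is guaranteed by the notion of ``common fixed-point subspace'' built into Definition \thref{def:02}, and could also be verified directly by decomposing the inner-product-preserving $(n-2)$-dimensional representation of $\mathrm{O}(n-3)$ into its isotypic components.
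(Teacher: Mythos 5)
Your proposal is correct and follows essentially the same route as the paper: both arguments rest on \thref{lem:05} (the $G_p$-invariance of $c_g$) combined with the odd homogeneity of a cubic form, applied separately on $H$ and on all of $T_pN^{n-2}$. Your parity argument via the element acting as $-\mathrm{id}_H$ is a clean instantiation of the paper's ``vanishes at some direction, hence at all directions'' step, and has the small advantage of working uniformly even when $\dim H=1$ (the case $n=4$), where the unit sphere of $H$ is disconnected and transitivity plus an intermediate-value argument alone would not immediately produce a zero.
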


Let us recall a~simple fact from the original paper \cite{10.2140/gt.2021.25.2621}:

\begin{lemma}[{\cite[Lemma 2.3]{10.2140/gt.2021.25.2621}}]\thlabel{lem:02}
A symmetric body of affine revolution $K\subset\mathbb R^m$, $m\geq 3$, admitting two different hyperplanes of affine revolution, is an ellipsoid.
\end{lemma}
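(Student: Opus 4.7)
The plan is to conjugate the symmetry group of $K$ into an orthogonal group via an averaging argument, to show that the two given $\mathrm O(m-1)$-subgroups jointly generate $\mathrm{SO}(m,\mathbb R)$, and to conclude that $K$ is a Euclidean ball in the invariant inner product, hence an ellipsoid.

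First I would reduce to the orthogonal setting. Because $K$ is symmetric, every affine symmetry of $K$ fixes the centre $O$, so after choosing $O$ as the origin the whole symmetry group $\mathcal G$ is linear. Since $\mathcal G$ preserves the bounded set $K$ and $O\in\operatorname{int}K$, it is relatively compact in $\mathrm{GL}(m,\mathbb R)$, and averaging any inner product over its closure produces a $\mathcal G$-invariant inner product $\langle\cdot,\cdot\rangle$ on $\mathbb R^m$. With respect to $\langle\cdot,\cdot\rangle$, the whole of $\mathcal G$ sits inside $\mathrm O(m,\mathbb R)$; in particular the two subgroups $G_1,G_2\cong\mathrm O(m-1,\mathbb R)$ associated with the distinct hyperplanes of revolution $H_1,H_2$ become orthogonal, each $G_i$ fixing a line $L_i$ and acting as the full orthogonal group on $H_i=L_i^\perp$. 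The hypothesis $H_1\neq H_2$ is then equivalent to $L_1\neq L_2$.

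Next I would show that $\overline{\langle G_1,G_2\rangle}\supset\mathrm{SO}(m,\mathbb R)$ by a Lie-algebra calculation. Each $\mathfrak g_i\subset\mathfrak{so}(m,\mathbb R)$ consists of skew-symmetric endomorphisms vanishing on $L_i$, and $\mathfrak g_1\cap\mathfrak g_2$ consists of those vanishing on the two-plane $L_1+L_2$, hence is isomorphic to $\mathfrak{so}(m-2,\mathbb R)$. A direct count gives
\[
\dim(\mathfrak g_1+\mathfrak g_2)
=2\binom{m-1}{2}-\binom{m-2}{2}
=\binom{m}{2}-1
=\dim\mathfrak{so}(m,\mathbb R)-1.
\]
However, for $m\geq 3$ the Lie algebra $\mathfrak{so}(m,\mathbb R)$ admits no proper Lie subalgebra of codimension one, its maximal subalgebras (the block forms $\mathfrak{so}(k)\oplus\mathfrak{so}(m-k)$, $\mathfrak u(m/2)$ and their relatives) all being strictly smaller. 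Hence $\mathfrak g_1+\mathfrak g_2$ is not closed under the bracket, and the Lie subalgebra it generates exhausts $\mathfrak{so}(m,\mathbb R)$. Exponentiating, the identity component of $\overline{\langle G_1,G_2\rangle}$ is all of $\mathrm{SO}(m,\mathbb R)$, which acts transitively on each $\langle\cdot,\cdot\rangle$-sphere centred at $O$. The radial function of $K$ is therefore constant, so $K$ is a Euclidean ball in $\langle\cdot,\cdot\rangle$ — equivalently, an ellipsoid in the original affine coordinates.

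The main obstacle is the Lie-theoretic fact that $\mathfrak{so}(m,\mathbb R)$ has no proper codimension-one subalgebra for $m\geq 3$. A reader reluctant to invoke this can instead replace the second step by a direct orbit computation: iterating the actions of $G_1$ and $G_2$ on a generic unit vector $u_1\in L_1$ one checks that already after one round $(G_1\cdot G_2)\cdot u_1$ contains an entire spherical cap, and connectedness of $S^{m-1}$ forces the orbit to be the whole sphere. This is elementary but notationally heavier than the Lie-algebra dimension count.
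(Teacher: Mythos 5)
Your proposal is correct, but note that the paper does not actually prove this lemma: it is imported verbatim from \cite[Lemma~2.3]{10.2140/gt.2021.25.2621}, so the only in-paper object of comparison is the proof of the companion statement \thref{lem:07}, which follows the same overall strategy (conjugate the full symmetry group into $\mathrm O(m,\mathbb R)$ by compactness, then show that the two given subgroups generate a group acting transitively on spheres) but settles the generation step differently. There the author quotes the group-theoretic fact that $\mathrm{SO}(m-2,\mathbb R)$ is a \emph{maximal connected} subgroup of $\mathrm{SO}(m-1,\mathbb R)$ \cite[Lemma~4]{10.2307/1968975}: since $\langle G_1,G_2\rangle^0$ strictly contains a copy of $\mathrm{SO}(m-1,\mathbb R)$ inside $\mathrm{SO}(m,\mathbb R)$, maximality forces it to be everything. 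You instead run a Lie-algebra dimension count, $\dim(\mathfrak g_1+\mathfrak g_2)=\binom{m}{2}-1$, and invoke the absence of a codimension-one subalgebra of $\mathfrak{so}(m,\mathbb R)$; both routes are valid and of comparable depth. Two small points. First, your justification of the codimension-one fact by appeal to the classification of maximal subalgebras is heavier than necessary: if $\mathfrak h\subset\mathfrak g$ has codimension one in a compact Lie algebra, take $v\perp\mathfrak h$ for an invariant inner product; then $\langle[X,v],Y\rangle=-\langle v,[X,Y]\rangle=0$ and $\langle[X,v],v\rangle=0$ give $[X,v]=0$ for all $X\in\mathfrak h$, so $v$ is central, which is impossible in $\mathfrak{so}(m,\mathbb R)$ for $m\geq 3$. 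Second, your identification $L_i=H_i^\perp$ after orthogonalization deserves one sentence (the fixed subspace cannot exceed dimension one without meeting the irreducible summand $H_i$), and the final step via the radial function implicitly uses that $K$ is star-shaped about $O$, which is the setting in which the lemma is applied here. Neither point is a gap.
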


Now we are ready to prove the following key lemma, which will eventually enable us to figure out the geometry of $M^{n-1}$:

\begin{lemma}\thlabel{lem:01}
In the above setting, there exists a~codimension $1$ hyperplane $H\in\mathrm{Gr}(n-2,T_pM^{n-1})$ such that the cubic form $c_f\vert_H$ is identically zero (i.e. $c_f$ is reducible).
\end{lemma}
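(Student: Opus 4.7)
The plan is to recast the hypothesis as a statement about the subvariety of the Grassmannian of $(n-3)$-planes on which $c_f$ vanishes, and then use the rigidity of \thref{lem:02} together with the explicit form of $c_f|_W$ from \thref{lem:03} to show this subvariety contains a Schubert cell --- which is exactly equivalent to $c_f$ having a linear factor.

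Set $V = T_pM^{n-1}$ and introduce the closed algebraic set
$$\mathcal{U} \;=\; \bigl\{\, U \in \mathrm{Gr}(n-3, V) \;:\; c_f|_U \equiv 0 \,\bigr\}.$$
\thref{lem:03} supplies, for each hyperplane $W \in \mathrm{Gr}(n-2, V)$, a codimension-$1$ subspace $U_W \subset W$ lying in $\mathcal{U}$ (the hyperplane of revolution of the corresponding section $N^{n-2}$, or all of $W$ if that section is an ellipsoid). Dually, the target --- the existence of $H \in \mathrm{Gr}(n-2, V)$ with $c_f|_H \equiv 0$ --- is equivalent to saying that $\mathcal{U}$ contains the whole Schubert variety $\mathrm{Gr}(n-3, H)$, since any hyperplane is the union of its $(n-3)$-subspaces.

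I would then make use of two further ingredients. First, by \thref{lem:02}, for a non-ellipsoidal section the hyperplane of revolution is unique; hence, away from the closed locus of elliptic sections, $U_W$ is canonically determined by $W$, giving a rational map $\sigma: \mathrm{Gr}(n-2, V) \dashrightarrow \mathcal{U}$. Second, \thref{lem:03} actually pins down $c_f|_W$ as lying in the two-dimensional subspace $\mathrm{span}\{\ell_W \cdot Q_0|_W,\; \ell_W^3\}$ (with $\ell_W$ the axis functional and $Q_0$ the canonical quadratic form), which is far more rigid than the bare existence of a linear factor. An incidence dimension count gives $\dim \mathcal{U} \geq n-3$, so $\mathcal{U}$ is at least of the right size to contain a Schubert variety of the sought type.

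The main obstacle --- and the crux of the proof --- is showing that $\mathcal{U}$ actually contains some such Schubert variety rather than being a subvariety of the wrong algebraic shape. The strategy is to pick two hyperplanes $W_1, W_2$ with $W_1 \cap W_2$ of codimension $2$ and to compare the axis functionals $\ell_{W_1}$ and $\ell_{W_2}$: both restrict to linear factors of $c_f|_{W_1 \cap W_2}$, and the uniqueness furnished by \thref{lem:02}, propagated along the pencil of hyperplanes through $W_1 \cap W_2$, should force a compatibility that ties the $U_W$ together inside a single hyperplane $H$. A careful Zariski-closure argument would then handle the degenerate cases (ellipsoidal sections, coincidences of axes) and yield the factorization $c_f = \ell \cdot q$, completing the proof.
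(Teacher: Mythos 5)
Your write-up is a plan rather than a proof: you say yourself that ``the main obstacle --- and the crux of the proof --- is showing that $\mathcal{U}$ actually contains some such Schubert variety,'' and the mechanism you propose for that step is neither carried out nor, as stated, workable. On $W_1\cap W_2$ the restricted cubic $c_f\vert_{W_1\cap W_2}$ can have up to three pairwise non-proportional linear factors, so unique factorization alone gives no reason for the factors coming from $U_{W_1}$ and $U_{W_2}$ to agree there; the ``compatibility'' you hope to propagate along the pencil through $W_1\cap W_2$ is exactly what needs to be proved. The dimension count $\dim\mathcal{U}\geq n-3$ is also far too weak, since a subvariety of $\mathrm{Gr}(n-3,V)$ of dimension $n-3$ need not contain a Schubert variety $\mathrm{Gr}(n-3,H)$. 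Most seriously, for $n=4$ the hypothesis you are exploiting is vacuous as pure algebra: there $U_W$ is a line in the plane $W$ on which the cubic vanishes, and \emph{every} real cubic form restricted to \emph{every} plane admits such a line (a real binary cubic always has a real linear factor), so no manipulation of the incidence data $\{U_W\}$ by itself can force $c_f$ to be reducible. Some non-algebraic input is unavoidable in that case. (A smaller point: the claim that $c_f\vert_W$ lies in $\mathrm{span}\{\ell_W\cdot Q_0\vert_W,\ \ell_W^3\}$ does follow from the $G_p$-invariance of the section's parametrization, but it is not what \thref{lem:03} literally asserts, and it does not close the gap.)

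For comparison, the paper handles the two regimes by entirely different and complete arguments. For $n\geq 5$ it is a one-line application of Bertini's theorem: if $c_f$ were irreducible, a generic hyperplane restriction $c_f\vert_H$ would again be irreducible, contradicting \thref{lem:03}, which forces a linear factor on every restriction. For $n=4$, where Bertini is unavailable, the paper uses precisely the extra geometric data you gesture at (uniqueness of the axis via \thref{lem:02}, continuity of $\pi\mapsto\ell_\pi$) but converts it into a topological obstruction: if no section through $p$ were an ellipsoid and no axis passed through $p$, the projected axes would define a continuous line field on $\mathrm{Gr}(2,T_pM^3)$, hence (after orienting) a nowhere-vanishing vector field on a closed surface of nonzero Euler characteristic --- a contradiction. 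You would need to supply an argument of comparable substance for the step you have left open, and it would very likely have to distinguish $n=4$ from $n\geq 5$ just as the paper does.
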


Interestingly enough, the proof for $n=4$ and $n\geq 5$ will be essentially different. In the first case, we need an argument from general topology, which holds only in even dimensions $n$. In the second case, we introduce an argument from algebraic geometry, which holds only in dimensions $n\geq 5$.

\begin{proof}[Proof of \thref{lem:01} for $n\geq 5$]
Suppose that $c_f$ is irreducible. Theorem of Bertini \cite[Theorem 17.16]{harris1992algebraic} asserts that there exists a~codimension $1$ hyperplane $H\in\mathrm{Gr}(n-2,T_pM^{n-1})$ such that $c_f\vert_H$ is again irreducible. However, it follows from \thref{lem:03} that $c_g=c_f\vert_H$ vanishes on some codimension $1$ hyperplane, i.e. admits a~factor of degree $1$, a~contradiction.
\end{proof}

\begin{proof}[Proof of \thref{lem:01} for $n=4$]
If there exists a~hyperplanar section of $M^3$ passing through $p$ that admits more than one axis of affine revolution, then by \thref{lem:02} and \thref{lem:03} we are done. Further, if there exists a~hyperplanar section of $M^3$ passing through $p$ such that its axis of affine revolution also passes through $p$, then again by \thref{lem:03} we are done. Therefore we may assume that every hyperplanar section of $M^3$ passing through $p$ admits exactly one axis of affine revolution, which does not pass through $p$.\\

In this case, we can define the following distribution on $\mathrm{Gr}(2,T_pM^3)$: for every plane $\pi\in\mathrm{Gr}(2,T_pM^3)$, let $\ell_\pi\subset\pi=T_\pi\mathrm{Gr}(2,T_pM^3)$ be the orthogonal projection of the (unique) axis of affine revolution of $M^3\cap\aff(\{\pi,O\})$ on $\pi$, which we already know is always a~$1$-dimensional linear subspace of $\pi$. Moreover, the map $\pi\mapsto\ell_\pi$ is clearly continuous (cf. \cite[Lemma 2.8]{10.2140/gt.2021.25.2621}), which gives rise to a~rank-$1$ subbundle $\eta$ of $T\mathrm{Gr}(2,T_pM^3)$. Now, its Stiefel-Whitney class $w_1(\eta)\in H^1(\mathrm{Gr}(2,T_pM^3);\mathbb Z/2\mathbb Z)=\{0\}$ must be $0$ and thus $\eta$ is orientable \cite[Problem~12-A]{milnor1974characteristic}. Selecting for each fiber of $\eta$ the positively oriented unit vector gives rise to a~non-vanishing vector field on $\mathrm{Gr}(2,T_pM^3)$, a~contradiction.
\end{proof}

Since the canonical parametrization is defined up to an orthogonal change of coordinates in the domain, without loss of generality we may further assume that $c_f$ vanishes on the hyperplane $\langle\hat{\boldsymbol e}_{n-1}\rangle^\perp$, i.e.
\begin{equation}\label{eq:01}c_f(\boldsymbol x)=x_{n-1}\cdot q_f(\boldsymbol x),\end{equation}
where $q_f$ is some quadratic form, not necessarily non-zero.

\begin{claim}\thlabel{lem:04}
For every irreducible quadric $Q^{n-2}\subset T_pM^{n-1}$, there exists an open subset $V\subseteq\mathrm{Gr}(n-2,T_pM^{n-1})$ of hyperplanes $H$ such that $Q^{n-2}\cap H$ contains no codimension $1$ linear subspace. Indeed, every linear space contained in an irreducible quadric has dimension at most half the dimension of the quadric \cite[Theorem 22.13]{harris1992algebraic}. Therefore if $n\geq 5$, the conclusion is trivial. For $n=4$, every irreducible quadric is projectively equivalent to either a~cone, a~straight line, or a~single point. In each case, there exists an open subset of planes that intersect $Q^2$ only at the origin.\hfill$\square$
\end{claim}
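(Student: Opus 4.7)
My plan is to exploit the classical algebraic--geometric bound on linear subspaces in an irreducible quadric (Theorem~22.13 of Harris, which the claim itself will cite) and then dispose of the exceptional low-dimensional case by explicit classification.

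Since $Q^{n-2}$ is a quadric cone in the $(n-1)$-dimensional space $T_pM^{n-1}$ and therefore has dimension $n-2$, the cited theorem forces any linear subspace contained in $Q^{n-2}$ to have dimension at most $\lfloor(n-2)/2\rfloor$. A codimension $1$ linear subspace of a hyperplane $H\in\mathrm{Gr}(n-2,T_pM^{n-1})$ has dimension $n-3$, and an elementary arithmetic check shows $n-3>(n-2)/2$ precisely when $n\geq 5$. Thus for $n\geq 5$ no codimension $1$ subspace of $H$ can sit inside $Q^{n-2}$ at all, let alone inside $Q^{n-2}\cap H$, so the entire Grassmannian may be taken as the open set $V$.

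For $n=4$ the above dimension count is too weak, because $Q^{2}\subset T_pM^{3}$ can genuinely contain lines through the origin (every ruling of a light cone is such a line). Here I would invoke Sylvester's law of inertia to classify irreducible real quadratic forms in three variables. Up to sign and real linear change of coordinates there are exactly three cases, modelled by $x^2+y^2+z^2$, $x^2+y^2-z^2$, and $x^2+y^2$, whose zero sets are a single point, a proper quadric cone, and a line respectively. In the first case every $H$ already works; in the third, every $H$ not containing the singular line works and such planes form an open dense subset of $\mathrm{Gr}(2,T_pM^3)$. The only genuinely non-trivial case is the proper cone, where I would take $V$ to consist of the planes on which the ambient form restricts to a positive definite quadratic form; by Sylvester's law this is an open condition, it is nonempty (witnessed e.g.\ by the plane $z=0$), and on any such plane $Q^2\cap H$ reduces to the origin and therefore contains no line.

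The main obstacle is really only the proper cone case in $n=4$: the regime $n\geq 5$ collapses to a one-line dimension count, while the geometric point in the hard case is that a $2$-plane meets a Lorentzian cone only at the origin exactly when the restriction of the ambient form to the plane is definite, an open non-trivial condition on the Grassmannian that cuts out the desired $V$.
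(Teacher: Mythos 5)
Your argument is correct and follows essentially the same route as the paper: the dimension bound from \cite[Theorem 22.13]{harris1992algebraic} disposes of $n\geq 5$ by the arithmetic $n-3>(n-2)/2$, and the case $n=4$ is handled by classifying irreducible quadrics in three variables (point, line, cone) and exhibiting an open set of planes meeting $Q^2$ only at the origin. Your treatment of the cone case via Sylvester's law of inertia --- definiteness of the restricted form being an open, nonempty condition --- merely makes explicit what the paper leaves implicit.
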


Now, if the quadratic form $q_f$ on the right-hand side of \eqref{eq:01} is irreducible, then from \thref{lem:04} it follows that for every $H\in V$ the zero set of $c_f\vert_H$ contains exactly one codimension $1$ hyperplane, namely $H\cap\langle\hat{\boldsymbol e}_{n-1}\rangle^\perp$. In particular, by \thref{lem:03}, $M^{n-1}\cap\aff(\{H,O\})$ is invariant under action of $\mathrm O(n-3,\mathbb R)$ with hyperplane of revolution $H\cap\langle\hat{\boldsymbol e}_{n-1}\rangle^\perp$. On the other hand, if the quadratic form $q_f$ is reducible, then $c_f$ can be decomposed into a~product of three linear forms, and hence its zero set is a~sum of three (not necessarily different) hyperplanes $H_1,H_2,H_3$. The same argument shows that for every $H\in\mathrm{Gr}(n-2,T_pM^{n-1})\setminus\{H_1,H_2,H_3\}$, $M^{n-1}\cap\aff(\{H,O\})$ is invariant under action of $\mathrm O(n-3,\mathbb R)$ with hyperplane of revolution $H\cap H_i$ for some $i\in\{1,2,3\}$. Denote by $V_i$ the set of hyperplanes $H\in\mathrm{Gr}(n-2,T_pM^{n-1})$ such that $M^{n-1}\cap\aff(\{H,O\})$ is invariant under action of $\mathrm O(n-3,\mathbb R)$ with hyperplane of revolution $H\cap H_i$, $i=1,2,3$. Since each $V_i$ is closed (cf. \cite[Lemma 2.7]{10.2140/gt.2021.25.2621}) and $V_1\cup V_2\cup V_3=\mathrm{Gr}(n-2,T_pM^{n-1})$, at least one of those sets has non-empty interior. After a~suitable change of coordinates, we may assume that this is the set corresponding to the plane $\langle\hat{\boldsymbol e}_{n-1}\rangle^\perp$.

\begin{claim}\thlabel{lem:08}
In either case, we are eventually in a~position where we have an open subset $V\subseteq\mathrm{Gr}(n-2,T_pM^{n-1})$ such that for every $H\in V$, $M^{n-1}\cap\aff(\{H,O\})$ is invariant under the action of $\mathrm O(n-3,\mathbb R)$ with hyperplane of revolution $H\cap\langle\hat{\boldsymbol e}_{n-1}\rangle^\perp$.\hfill$\square$
\end{claim}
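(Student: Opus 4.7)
The plan is to split into two cases according to whether the quadratic form $q_f$ in the factorization $c_f(\boldsymbol x) = x_{n-1} \cdot q_f(\boldsymbol x)$ from \eqref{eq:01} is irreducible.

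First I would treat the irreducible case. Applying Claim \thref{lem:04} to the irreducible quadric $Q^{n-2} = \{q_f = 0\} \subseteq T_pM^{n-1}$ produces an open set $V \subseteq \mathrm{Gr}(n-2,T_pM^{n-1})$ such that $Q^{n-2}\cap H$ contains no codimension-$1$ linear subspace of $H$. For each such $H$ the factorization $c_f|_H = (x_{n-1}|_H)\cdot (q_f|_H)$ shows that the zero set of $c_f|_H$ contains exactly one codimension-$1$ hyperplane of $H$, namely $H \cap \langle\hat{\boldsymbol e}_{n-1}\rangle^\perp$. Applying Claim \thref{lem:03} to $N^{n-2} = M^{n-1}\cap\aff(\{H,O\})$, the hyperplane of revolution (on which the cubic form must vanish) can only be this one, and we are done.

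Next I would handle the reducible case. Here $c_f$ factors as a product of three linear forms, yielding three (not necessarily distinct) hyperplanes $H_1,H_2,H_3 \subseteq T_pM^{n-1}$ whose union is the zero set of $c_f$. For every $H \in \mathrm{Gr}(n-2,T_pM^{n-1})$ distinct from $H_1,H_2,H_3$, the zero set of $c_f|_H$ is the union of the three codimension-$1$ hyperplanes $H\cap H_i$, and by Claim \thref{lem:03} at least one of them must serve as the hyperplane of revolution of $M^{n-1}\cap\aff(\{H,O\})$. Letting $V_i$ be the set of $H$ for which $H\cap H_i$ plays this role, the continuity of the hyperplane of revolution as a function of $H$ (\cite[Lemma 2.7]{10.2140/gt.2021.25.2621}) shows each $V_i$ is closed; since $V_1\cup V_2\cup V_3$ covers the Grassmannian, a Baire-category argument forces some $V_i$ to have non-empty interior, and a suitable linear change of coordinates relocates the chosen $H_i$ onto $\langle\hat{\boldsymbol e}_{n-1}\rangle^\perp$.

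The main obstacle is the reducible case: the identification of the hyperplane of revolution is no longer canonical pointwise, because it may jump among the three candidates $H\cap H_1$, $H\cap H_2$, $H\cap H_3$ as $H$ varies over the Grassmannian. Closedness of the $V_i$ together with Baire is the essential tool that fixes a single choice on an open set, so that the later global argument may proceed from a coordinate system in which the hyperplane of revolution is consistently $H\cap\langle\hat{\boldsymbol e}_{n-1}\rangle^\perp$.
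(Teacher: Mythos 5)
Your proposal is correct and follows essentially the same route as the paper: the irreducible case via Claim \thref{lem:04} and Claim \thref{lem:03}, and the reducible case via the three closed sets $V_1,V_2,V_3$ covering the Grassmannian, one of which must have non-empty interior, followed by a change of coordinates. The only cosmetic remark is that the final change of coordinates should be taken orthogonal so as to preserve the canonical parametrization, which is exactly what the paper does.
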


\begin{notation}
For any $2$-dimensional plane $\pi\in\mathrm{Gr}(2,T_pM^{n-1})$ and any point $\boldsymbol a\in T_pM^{n-1}$, denote by $\mathrm{Ref}_\pi(\boldsymbol a)$ the orthogonal reflection of $\boldsymbol a$ across the plane $\pi$. Further, for any angle $\alpha\in\mathbb R$, denote by $\mathrm{Rot}_\pi^\alpha(\boldsymbol a)$ the rotation of $\boldsymbol a$ around the axis $\pi^\perp$ by the angle $\alpha$.
\end{notation}

Let us define a~continuous map
\begin{gather*}
\phi:\mathrm{Gr}(1,\langle\hat{\boldsymbol e}_{n-1}\rangle^\perp)\times\mathrm{Gr}(1,\langle\hat{\boldsymbol e}_{n-1}\rangle^\perp)\times(T_pM^{n-1}\setminus\langle\hat{\boldsymbol e}_{n-1}\rangle^\perp)\to\mathrm{Gr}(n-2,T_pM^{n-1})\times\mathrm{Gr}(n-2,T_pM^{n-1}),\\
\phi(\ell_1,\ell_2,\boldsymbol a)=(\langle\ell_1^\perp\cap\langle\hat{\boldsymbol e}_{n-1}\rangle^\perp,\boldsymbol a\rangle,\langle\ell_2^\perp\cap\langle\hat{\boldsymbol e}_{n-1}\rangle^\perp,\mathrm{Ref}_{\langle\ell_1,\hat{\boldsymbol e}_{n-1}\rangle}(\boldsymbol a)\rangle)
\end{gather*}
(fig. \ref{fig:04}) and let $\ell\in\mathrm{Gr}(1,\langle\hat{\boldsymbol e}_{n-1}\rangle^\perp),\ \boldsymbol a\in T_pM^{n-1}\setminus\langle\hat{\boldsymbol e}_{n-1}\rangle^\perp$ be such that $\langle\ell^\perp\cap\langle\hat{\boldsymbol e}_{n-1}\rangle^\perp,\boldsymbol a\rangle\in V$. Then we have $\langle\ell^\perp\cap\langle\hat{\boldsymbol e}_{n-1}\rangle^\perp,\boldsymbol a\rangle=\langle\ell^\perp\cap\langle\hat{\boldsymbol e}_{n-1}\rangle^\perp,\mathrm{Ref}_{\langle\ell,\hat{\boldsymbol e}_{n-1}\rangle}(\boldsymbol a)\rangle$, so
$$\phi(\ell,\ell,\boldsymbol a)=(\langle\ell^\perp\cap\langle\hat{\boldsymbol e}_{n-1}\rangle^\perp,\boldsymbol a\rangle,\langle\ell^\perp\cap\langle\hat{\boldsymbol e}_{n-1}\rangle^\perp,\boldsymbol a\rangle)$$
is an element of $V\times V$. Since $V$ is open, the preimage $\phi^{-1}(V\times V)$ is an open neighborhood of $(\ell,\ell,\boldsymbol a)$. Thus it contains contains a~product of non-empty open sets $W_1\times W_2\times W_3$, where $W_1,W_2\subseteq\mathrm{Gr}(1,\langle\hat{\boldsymbol e}_{n-1}\rangle^\perp)$ are neighborhoods of $\ell$ and $W_3\subseteq(T_pM^{n-1}\setminus\langle\hat{\boldsymbol e}_{n-1}\rangle^\perp)$ is a~neighborhood of $\boldsymbol a$. Moreover, since $\phi(\ell_1,\ell_2,\boldsymbol a)=\phi(\ell_1,\ell_2,\lambda\boldsymbol a)$ for every $\lambda\neq 0$, we may assume that $W_3$ is the interior of a~generalized cone intersected with $U$.\\

Let $\ell_1\in W_1,\ \ell_2\in W_2,\ \boldsymbol a\in W_3$ and define $\boldsymbol a'\colonequals\mathrm{Ref}_{\langle\ell_1,\hat{\boldsymbol e}_{n-1}\rangle}(\boldsymbol a),\ \boldsymbol a''\colonequals\mathrm{Ref}_{\langle\ell_2,\hat{\boldsymbol e}_{n-1}\rangle}(\boldsymbol a')$ (fig. \ref{fig:04}). In light of the definition of $V$, it follows from \thref{lem:05} that $f\vert_{\langle\ell_1^\perp\cap\langle\hat{\boldsymbol e}_{n-1}\rangle^\perp,\boldsymbol a\rangle}$ is invariant under action of $\mathrm O(n-3,\mathbb R)$ with hyperplane of revolution $\ell_1^\perp\cap\langle\hat{\boldsymbol e}_{n-1}\rangle^\perp$. In particular, this group contains the reflection across the common fixed-point subspace, which can be viewed as a~restriction of $\mathrm{Ref}_{\langle\ell_1,\hat{\boldsymbol e}_{n-1}\rangle}$. Similarly, $f\vert_{\langle\ell_2^\perp\cap\langle\hat{\boldsymbol e}_{n-1}\rangle^\perp,\boldsymbol a'\rangle}$ is invariant under $\mathrm{Ref}_{\langle\ell_2,\hat{\boldsymbol e}_{n-1}\rangle}$, which implies
$$f(\boldsymbol a'')=f(\boldsymbol a')=f(\boldsymbol a).$$
Now, observe that
$$\boldsymbol a''=\mathrm{Ref}_{\langle\ell_2,\hat{\boldsymbol e}_{n-1}\rangle}(\mathrm{Ref}_{\langle\ell_1,\hat{\boldsymbol e}_{n-1}\rangle}(\boldsymbol a))=(\mathrm{Ref}_{\langle\ell_2,\hat{\boldsymbol e}_{n-1}\rangle}\circ\mathrm{Ref}_{\langle\ell_1,\hat{\boldsymbol e}_{n-1}\rangle})(\boldsymbol a)=\mathrm{Rot}_{\langle\ell_1,\ell_2\rangle}^{2\angle\ell_1\ell_2}(\boldsymbol a),$$
which eventually gives us
$$f\left(\mathrm{Rot}_{\langle\ell_1,\ell_2\rangle}^{2\angle\ell_1\ell_2}(\boldsymbol a)\right)=f(\boldsymbol a)$$
for every $\ell_1\in W_1,\ \ell_2\in W_2,\ \boldsymbol a\in W_3$. It means that the graph of $f$ (i.e. the surface $M^{n-1}$) is locally invariant on $W_3$ under action of $\mathrm O(n-2,\mathbb R)$ with common fixed-point subspace $\langle\hat{\boldsymbol e}_{n-1}\rangle^\perp$. Indeed, if we fix $\ell_1=\ell$ and let $\ell_2$ vary over $W_2$, we can rotate $\boldsymbol a$ in any direction by any sufficiently small angle. In particular, the series expansion of $f$ at $p$, as long as it is defined, is invariant under the aforementioned action of $\mathrm O(n-2,\mathbb R)$, which reads
\begin{equation}\label{eq:02}q_f(\boldsymbol x)=a\langle\boldsymbol x,\boldsymbol x\rangle+b{x_{n-1}}^2,\quad a,b\in\mathbb R\end{equation}
and thus
$$c_f(\boldsymbol x)=x_{n-1}(a\langle\boldsymbol x,\boldsymbol x\rangle+b{x_{n-1}}^2),\quad a,b\in\mathbb R.$$

\begin{figure}[h]
\includegraphics{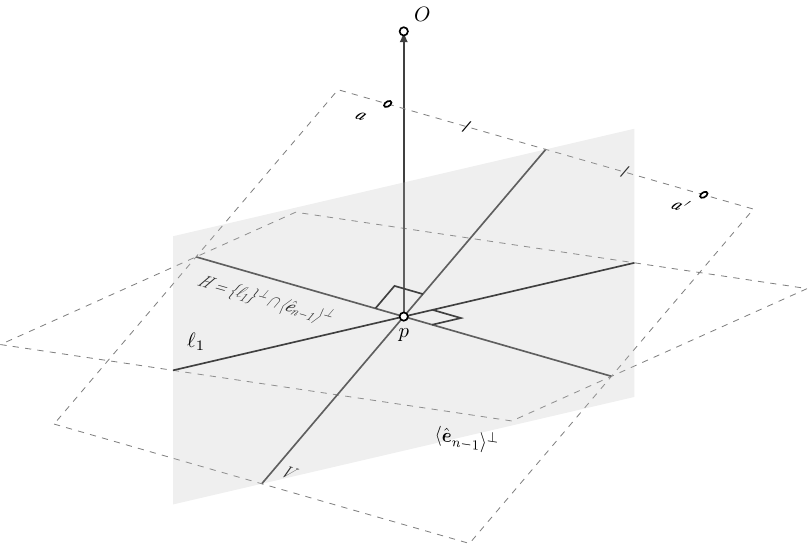}
\caption{The construction of $\boldsymbol a'$}
\label{fig:04}
\end{figure}

\begin{remark}
Our considerations so far show that at every point $p\in M^{n-1}$ with positive definite second fundamental form, the series expansion of $M^{n-1}$, as long as it is defined, admits a~symmetry group $\mathrm O(n-2,\mathbb R)$. Under the additional assumption that $M^{n-1}$ is locally strongly convex, such hypersurfaces have already been classified for $n=4$ (e.g. in \cite{Lu2005}). But since they may take a~complicated form of warped products, even such a~result gives no straightforward solution to our problem, not to mention higher dimensions, where to the author's best knowledge such a~classification is still an open problem.
\end{remark}

\section{Proof of the main theorem}

With this result at hand, we are ready to prove our main theorem:

\begin{proof}[Proof of \thref{thm:01}]
Denote by $V\subseteq\mathrm{Gr}(n-2,T_pM^{n-1})$ the set of hyperplanes $H$ such that $M^{n-1}\cap\aff(\{H,O\})$ admits an axis of affine revolution $\ell$ perpendicular to $H\cap\langle\hat{\boldsymbol e}_{n-1}\rangle^\perp$. Clearly $V$ is closed (cf. \cite[Lemma 2.7]{10.2140/gt.2021.25.2621}) and has non-empty interior (\thref{lem:08}).

\begin{lemma}\thlabel{lem:09}
In the above setting, we have either $V=\mathrm{Gr}(n-2,T_pM^{n-1})$ or $q_f\equiv 0$.
\end{lemma}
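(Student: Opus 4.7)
My plan is to argue by contradiction: suppose $V \subsetneq \mathrm{Gr}(n-2, T_pM^{n-1})$ and $q_f \not\equiv 0$, and derive a contradiction. Since $V$ is closed with non-empty interior, $V^c$ is open and non-empty. For any $H_0 \in V^c$, the section $N_0 = M^{n-1} \cap \aff(\{H_0, O\})$ is a body of affine revolution by hypothesis, so by \thref{lem:03} its cubic form $c_f|_{H_0}$ vanishes on the hyperplane of revolution $P_0$, and $P_0 \neq H_0 \cap \langle\hat{\boldsymbol e}_{n-1}\rangle^\perp$ since $H_0 \notin V$. Writing $c_f = x_{n-1}q_f$ and using that $x_{n-1}|_{H_0}$ does not vanish on $P_0$, the form $q_f|_{H_0}$ must vanish on the codimension-$1$ subspace $P_0$. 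The task thus reduces to showing that, when $q_f \not\equiv 0$, the set of hyperplanes $H$ on which $q_f|_H$ admits a codimension-$1$ zero hyperplane has empty interior.

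I proceed by case analysis on $q_f(\boldsymbol x) = a|\boldsymbol x|^2 + b x_{n-1}^2$, whose matrix in the standard basis is $\mathrm{diag}(a, \ldots, a, a+b)$. If $a = 0$ and $b \neq 0$, the only codimension-$1$ zero hyperplane of $q_f|_{H_0} = b(x_{n-1}|_{H_0})^2$ is $H_0 \cap \langle\hat{\boldsymbol e}_{n-1}\rangle^\perp$, forcing $P_0 = H_0 \cap \langle\hat{\boldsymbol e}_{n-1}\rangle^\perp$, contradiction. If $q_f$ is definite ($a$ and $a+b$ of the same sign), $q_f|_H$ is definite for every $H$, so no $P_0$ can exist. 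If $q_f$ is degenerate with $a+b = 0$, $a \neq 0$, a direct computation shows $q_f|_H$ is definite whenever $\hat{\boldsymbol e}_{n-1} \notin H$, so $V^c$ would be contained in the nowhere-dense $\{H : \hat{\boldsymbol e}_{n-1} \in H\}$, contradicting openness. If $q_f$ is nondegenerate indefinite, Cauchy interlacing applied to the spectrum $\{a+b, a, \ldots, a\}$ gives eigenvalues of $q_f|_H$ equal to $\mu_1 \in [a+b, a]$ together with $a$ of multiplicity $n-3$. For $n \geq 5$, the rank of $q_f|_H$ is at least $n-3 \geq 2$, and the rank-drop case $\mu_1 = 0$ produces a positive semi-definite form with zero locus of codimension $\geq 2$ in $H$, so no codimension-$1$ zero hyperplane exists.

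The main obstacle is the $n = 4$ case with indefinite $q_f$, in which $q_f|_H$ genuinely factors into two distinct real linear forms on an open subset of $\mathrm{Gr}(2, T_pM^3)$. Here the plan is a continuity argument at the boundary of $V^c$. Parameterizing $H$ by the angle $\theta \in [0, \pi/2]$ with $\langle\hat{\boldsymbol e}_3\rangle^\perp$, a direct computation gives $q_f|_H(y_1, y_2) = a y_1^2 + \alpha(\theta) y_2^2$ with $\alpha(\theta) = a + b\sin^2\theta$. Since $q_f|_H$ is positive (or negative) definite for $\alpha(\theta) > 0$, the region $\{\alpha > 0\}$ lies in $V$, forcing $V^c \subseteq \{\alpha < 0\}$. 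Pick any boundary point $H_* \in \overline{V^c} \cap V$. Approaching $H_*$ from $V$ the hyperplane of revolution equals $H \cap \langle\hat{\boldsymbol e}_3\rangle^\perp = \{y_2 = 0\}$, while approaching from $V^c$ it equals one of the factor lines $\{\sqrt{a}\,y_1 \pm \sqrt{-\alpha(\theta)}\,y_2 = 0\}$. These two limits are distinct because $a \neq 0$, so by the continuity of the axis (cf.~\cite[Lemma~2.8]{10.2140/gt.2021.25.2621}) the section at $H_*$ must admit at least two different hyperplanes of revolution; by \thref{lem:02} it is therefore an ellipsoid. Hence $c_f|_{H_*} \equiv 0$, i.e.~$a y_1^2 + \alpha(\theta_{H_*}) y_2^2 \equiv 0$, which forces $a = 0$: the final contradiction.
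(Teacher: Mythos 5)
Your overall strategy is sound and, for $n\geq 5$, genuinely different from the paper's: where the paper disposes of the generic case with a one-line appeal to the classification of linear subspaces lying on quadrics \cite[Theorem 22.13]{harris1992algebraic} applied to the special form \eqref{eq:02}, you replace this with an explicit spectral analysis of $\mathrm{diag}(a,\dots,a,a+b)$ and Cauchy interlacing. This is more elementary and self-contained, at the cost of a longer case analysis; it correctly recovers the paper's dichotomy (either $a=0$, whence $V=\mathrm{Gr}(n-2,T_pM^{n-1})$ or $q_f\equiv 0$, or no admissible zero hyperplane $P_0$ exists at all, whence $V^c=\emptyset$). For $n=4$ your boundary-continuity argument is the same in spirit as the paper's, just organized differently: you first eliminate all non-indefinite signatures by linear algebra and then run the continuity argument only in the remaining case, reaching $a=0$ directly, whereas the paper argues at an arbitrary $\pi\in\partial V$ and finishes by counting the planes on which $c_f$ can vanish.

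There is, however, one step in the $n=4$ endgame that needs repair. From the fact that the two limit lines in $H_*$ (namely $H_*\cap\langle\hat{\boldsymbol e}_3\rangle^\perp$ and the limiting factor line $P_*$ of $q_f\vert_{H_*}$) are distinct, you conclude that the section at $H_*$ ``admits at least two different hyperplanes of revolution'' and invoke \thref{lem:02}. This does not follow immediately: what you actually have are two distinct lines in $T_pN^2=H_*$, each perpendicular to \emph{some} axis of affine revolution of the section, and these two axes could coincide. In that degenerate subcase the single common axis is perpendicular to two distinct lines spanning $H_*$, hence to all of $H_*$, so it passes through $p$; then there is only one hyperplane of revolution and \thref{lem:02} is not applicable, but $G_p\simeq\mathrm O(2,\mathbb R)$ and \thref{lem:03} still gives $c_f\vert_{H_*}\equiv 0$, so your final contradiction $a=0$ survives. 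The paper handles exactly this dichotomy ($\ell_*\neq\ell$ versus $\ell_*=\ell$) explicitly, and you should too. A second, very minor omission: the implication $c_f\vert_{H_*}\equiv 0\Rightarrow a y_1^2+\alpha(\theta_{H_*})y_2^2\equiv 0$ uses $\sin\theta_{H_*}\neq 0$, which does hold here because $H_*\in\overline{V^c}\subseteq\{\alpha\leq 0\}$ while $\alpha(0)=a>0$, but this deserves a sentence.
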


Again, we have to consider separately the special case $n=4$ and the generic case $n\geq 5$.

\begin{proof}[Proof of \thref{lem:09} for $n\geq 5$]
The projective quadric $Q_f^{n-2}\colonequals\{\boldsymbol x\in T_pM^{n-1}:q_f(\boldsymbol x)=0\}$ does not contain any linear subspace of dimension $n-3$ unless it is reducible \cite[Theorem 22.13]{harris1992algebraic}. From \eqref{eq:02} it can be readily seen that the latter implies $a=0$, which reads $c_f(\boldsymbol x)=b{x_{n-1}}^3$. Now, if $b=0$ then $q_f\equiv 0$, and we are done. Otherwise $c_f\vert_H$ vanishes precisely on $H\cap\langle\hat{\boldsymbol e}_{n-1}\rangle^\perp$ for every $H\in\mathrm{Gr}(n-2,T_pM^{n-1})$ and thus $V=\mathrm{Gr}(n-2,T_pM^{n-1})$.
\end{proof}

\begin{proof}[Proof of \thref{lem:09} for $n=4$]
Suppose that $V\neq\mathrm{Gr}(2,T_pM^3)$ and let $\pi\in\partial V$. Then there exists a~convergent sequence of planes $\pi_k\to\pi$ such that $M^3\cap\aff(\{\pi_k,O\})$ admits an axis of affine revolution $\ell_k$ perpendicular to some line in $\pi_k\cap Q_f^2$. After passing to a~subsequence, without loss of generality we may assume that the sequence $\ell_k$ is convergent to some $\ell_*$ perpendicular to some line in $\pi\cap Q_f^2$. From \eqref{eq:02} it can be readily seen that the latter is different from $\pi\cap\langle\hat{\boldsymbol e}_3\rangle^\perp$. Moreover, a~simple geometric continuity argument shows that $\ell_*$ is the axis of affine revolution of $M^3\cap\aff(\{\pi,O\})$ (cf. \cite[Lemma 2.7]{10.2140/gt.2021.25.2621}). Now, if $\ell_*\neq\ell$, then $M^3\cap\aff(\{\pi,O\})$ admits two different axes of affine revolution and hence is an ellipsoid (\thref{lem:02}). In particular, $c_f\vert_\pi\equiv 0$. On the other hand, if $\ell_*=\ell$, then $\ell$ is perpendicular to two different lines in the plane $\pi$, so it is perpendicular to the plane $\pi$ itself. Again, it implies $c_f\vert_\pi\equiv 0$ (\thref{lem:03}). Hence $c_f\vert_\pi\equiv 0$ for every $\pi\in\partial V$. However, $c_f\vert_\pi$ can vanish for at most $3$ different planes $\pi$ unless $q_f\equiv 0$ and the assertion follows.
\end{proof}

\begin{definition}[{\cite[II.3]{nomizu1994affine}}]\thlabel{def:01}
Let $f:M\to\mathbb R^{m+1}$ be a~non-degenerate hypersurface immersion. It is well known that there exists a~canonical choice of a~transversal vector field $\xi$ called the \emph{affine normal field} or \emph{Blaschke normal field} \cite[Definition~II.3.1]{nomizu1994affine}. The affine normal vector field $\xi$ gives rise to the induced connection $\nabla$, the affine fundamental form $h$, which is traditionally called the \emph{affine metric}, and the affine shape operator $S$ determined by the formulas
\begin{gather*}
D_XY=\nabla_XY+h(X,Y)\xi,\\
D_X\xi=-SX.
\end{gather*}
We shall call $(\nabla,h,S)$ the \emph{Blaschke structure} on the hypersurface $M$ \cite[Definition~II.3.2]{nomizu1994affine}. From Codazzi equation for $h$ we see that the cubic form
\begin{equation}\label{eq:05}C(X,Y,Z)\colonequals(\nabla_Xh)(Y,Z)\end{equation}
is symmetric in $X$, $Y$ and $Z$ \cite[II.4]{nomizu1994affine}.
\end{definition}

\begin{claim}\thlabel{lem:10}
It turns out that the condition $c_f\equiv 0$ implies that the cubic form $C$ also vanishes at $p$. It is by no means obvious, as \eqref{eq:05} can hardly be expressed in the extrinsic coordinate system (cf. \cite[1.4.3]{arxiv.2202.09894}). However, we can readily see that $C$ depends only on $J_p^3f$. Indeed, the affine normal field $\xi$ depends only on $J_p^3f$ (cf. \cite[Example~II.3.3]{nomizu1994affine}) and the affine metric $h$ depend only on $J_p^2f$ (cf. \cite[Example~II.3.3, Proposition~II.2.5]{nomizu1994affine}). Hence the covariant derivative
$$(\nabla_Xh)(Y,Z)\colonequals X(h(Y,Z))-h(\nabla_XY,Z)-h(Y,\nabla_XZ)$$
depends only on $J_p^3f$. In particular, if another function $g:T_pN^{n-1}\supset U\to\mathbb R$ satisfies $J_p^3f=J_p^3g$, then the cubic form of $M$ and the cubic form of $N$ coincide at $p$. Now, since $c_f\equiv 0$, the canonical parametrization of $M^{n-1}$ osculates up to the terms of \nth{3} order the parametrization of the unit sphere
$$g(\boldsymbol x)=1-\sqrt{1-\langle\boldsymbol x,\boldsymbol x\rangle},$$
for which the cubic form $C$ vanishes identically (cf. \cite[Corollary~II.4.2]{nomizu1994affine}). This concludes the argument.\hfill$\square$
\end{claim}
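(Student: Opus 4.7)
My plan is a jet-comparison argument: reduce the statement to a model hypersurface—a non-degenerate quadric—for which the Blaschke cubic form $C$ is already known to vanish identically. This rests on two observations, which I would establish in sequence: first, that $C_p$ depends only on the 3-jet $J_p^3 f$ of the canonical parametrization; second, that the hypothesis $c_f \equiv 0$ forces $J_p^3 f$ to coincide with the 3-jet of such a quadric at the origin.

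For the jet-dependence step, I would unpack Definition \thref{def:01} pointwise. The affine (Blaschke) metric $h$ at $p$ is a pointwise algebraic expression in the Hessian of $f$ (via its normalization by a power of $\det\mathrm{Hess}\,f$), so it is already determined by $J_p^2 f$; by the classical formula (cf.~\cite[Example~II.3.3]{nomizu1994affine}), the affine normal $\xi_p$ is an algebraic combination of derivatives of $f$ of order at most three. The induced connection $\nabla$, extracted from the Gauss decomposition $D_X Y = \nabla_X Y + h(X,Y)\xi$, therefore has Christoffel symbols at $p$ depending only on $J_p^2 f$ together with $\xi_p$, hence on $J_p^3 f$. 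Evaluating
\[
C(X,Y,Z) = X(h(Y,Z)) - h(\nabla_X Y, Z) - h(Y, \nabla_X Z)
\]
at $p$ requires only $h$, $\nabla$, and the first partials of $h$ at $p$, each of which is a pointwise function of partial derivatives of $f$ of order at most three. Hence $C_p$ is a function of $J_p^3 f$ alone.

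For the comparison step, the hypothesis $c_f \equiv 0$ reduces the canonical parametrization to $f(\boldsymbol x) = \tfrac{1}{2}\langle\boldsymbol x,\boldsymbol x\rangle + O(\|\boldsymbol x\|^4)$, so $J_p^3 f$ equals the 3-jet at the origin of any osculating non-degenerate quadric with the same second fundamental form at $p$—for concreteness, the unit sphere centered at $O$, locally parametrized by $g(\boldsymbol x) = 1 - \sqrt{1 - \langle\boldsymbol x,\boldsymbol x\rangle}$. Since the affine cubic form vanishes identically on any non-degenerate quadric (cf.~\cite[Corollary~II.4.2]{nomizu1994affine}), the jet-dependence from the previous step immediately yields $C_p = 0$ on $M^{n-1}$.

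The main obstacle I anticipate is the rigorous check behind the first step: the definition $C = \nabla h$ looks as if it could in principle depend on $h$ in a neighborhood, not merely pointwise, so I have to be confident that $\nabla_X h$ at $p$ unfolds into a finite linear combination of values and first partials of $h$ at $p$ and nothing more. I would handle this by keeping the computation strictly pointwise in the extrinsic coordinate system provided by $f$, invoking the explicit coordinate formulas from \cite{nomizu1994affine} rather than attempting a purely intrinsic argument. Once this pointwise dependence is secured, the comparison with the sphere closes the proof with no further work.
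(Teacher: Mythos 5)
Your proposal is correct and follows essentially the same route as the paper: both arguments establish that the Blaschke cubic form $C$ at $p$ is determined by the $3$-jet $J_p^3f$ (via the pointwise dependence of $h$, $\xi$ and $\nabla$ on derivatives of $f$ of order at most three) and then compare with the osculating unit sphere, on which $C$ vanishes identically as a non-degenerate quadric. The only difference is that you spell out the pointwise bookkeeping for the Christoffel symbols and the first partials of $h$ slightly more explicitly, which the paper leaves implicit.
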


The following lemma may be considered a~counterpart of \thref{lem:02}:

\begin{lemma}\thlabel{lem:07}
A body of affine $2$-revolution $K\subset\mathbb R^m$, $m\geq 4$, admitting three different codimension $2$ hyperplanes of affine revolution, admits a~codimension $1$ hyperplane of affine revolution (i.e. is a~body of affine $1$-revolution).
\end{lemma}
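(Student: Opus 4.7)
The plan is to mimic the proof of \thref{lem:02} one codimension up: show the three $2$-dimensional hyperaxes $L_1,L_2,L_3$ of the given $\mathrm O(m-2,\mathbb R)$-symmetries $G_1,G_2,G_3$ share a common line $\ell$, then recover an $\mathrm O(m-1,\mathbb R)$-symmetry of $K$ on $\ell^\perp$ via the Dynkin maximality of $\mathfrak{so}(m-2)\subset\mathfrak{so}(m-1)$. As a preliminary normalization, since $\tilde G:=\langle G_1,G_2,G_3\rangle$ is a compact subgroup of $\mathrm{GL}(m,\mathbb R)$, I would apply an affine change of coordinates placing $\tilde G\subseteq\mathrm O(m,\mathbb R)$; in these coordinates $L_i=H_i^\perp$ and $G_i$ acts as standard $\mathrm O(m-2,\mathbb R)$ on $H_i$.

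The central claim is then $\ell:=L_1\cap L_2\cap L_3\neq\{0\}$. Suppose otherwise: a case analysis on the pairwise intersections $\dim(L_i\cap L_j)\in\{0,1\}$, combined with orbit analysis on $S^{m-1}$, would show that without a common fixed line the group $\tilde G$ acts transitively on every origin-centered sphere in $\mathbb R^m$. For example, when $L_1\cap L_2=\{0\}$ and $m\geq 5$, the orbits of a single unit vector under $G_1$ and $G_2$ already sweep out all of $S^{m-1}$; the remaining delicate cases (all pairwise intersections trivial for $m=4$, or all pairwise intersections $1$-dimensional but triple intersection trivial) are handled using the exceptional isomorphism $\mathrm{SO}(4)\cong(\mathrm{SU}(2)\times\mathrm{SU}(2))/\{\pm I\}$ and the Montgomery--Samelson classification of transitive compact group actions on spheres. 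Because $K$ is origin-symmetric and star-convex, transitivity on every centered sphere forces $K$ to be a ball, which trivially admits a codimension $1$ hyperplane of affine revolution.

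With a common line $\ell$ at hand, $\ell^\perp\cong\mathbb R^{m-1}$ is $\tilde G$-invariant and each restricted group $G_i\vert_{\ell^\perp}$ is a copy of $\mathrm O(m-2,\mathbb R)$ fixing the line $\ell_i':=L_i\cap\ell^\perp$; the three lines $\ell_1',\ell_2',\ell_3'$ are pairwise distinct because $L_1,L_2,L_3$ are distinct $2$-planes all containing $\ell$. Since $\mathfrak{so}(m-2)$ is a maximal subalgebra of $\mathfrak{so}(m-1)$, any two of $\mathrm{Lie}(G_i\vert_{\ell^\perp})$ with distinct fixed lines already generate $\mathfrak{so}(m-1)$, and together with the orientation-reversing reflections contained in each $G_i\vert_{\ell^\perp}$ this forces $\tilde G\vert_{\ell^\perp}=\mathrm O(m-1,\mathbb R)$. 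Because $\tilde G$ fixes $\ell$ pointwise, $\tilde G$ is itself a copy of $\mathrm O(m-1,\mathbb R)$ inside $\mathrm{Sym}(K)$ with $1$-dimensional hyperaxis $\ell$ and codimension $1$ hyperplane of revolution $\ell^\perp$, as desired.

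The main obstacle is the common-axis step: ruling out configurations where the three codimension $2$ hyperaxes have trivial triple intersection without $K$ being a ball. The Lie-algebra machinery handles the descent from codimension $1$ to codimension $2$ cleanly once a common axis exists, but the low-dimensional case $m=4$ is especially delicate, since three $2$-planes in $\mathbb R^4$ admit configurations with all pairwise intersections trivial and the transitivity argument there relies on the exceptional structure of $\mathrm{SO}(4)$ rather than on generic reasoning.
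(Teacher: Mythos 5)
Your overall architecture coincides with the paper's: establish a dichotomy (either the generated group acts transitively on centred spheres, forcing $\partial K$ to be a sphere, or some of the $\mathrm O(m-2,\mathbb R)$-factors share a common fixed line), and then promote $\mathrm{SO}(m-2)$ to $\mathrm{SO}(m-1)$ via the maximality of $\mathrm{SO}(m-2)$ as a connected subgroup of $\mathrm{SO}(m-1)$ --- the paper cites precisely this fact from Montgomery--Samelson, so your final step is essentially the paper's. One structural remark: you aim for more than is needed. The conclusion only requires \emph{two} of the axes $L_i$ to meet in a line, since $\langle G_i,G_j\rangle$ then fixes that line and restricts to $\mathrm O(m-1,\mathbb R)$ on its orthogonal complement; demanding $L_1\cap L_2\cap L_3\neq\{0\}$ makes your ``otherwise'' branch strictly larger (it must also absorb configurations where exactly two axes meet), which is harmless but multiplies cases. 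The paper correspondingly proves only that $\dim(H_2+H_3)=m-1$ for a suitable pair, unless $\partial K$ is a sphere.

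The genuine gap is that the heart of the lemma is precisely this dichotomy, and your proposal does not prove it in the hardest case. For $m\geq 5$ with $L_i\cap L_j=\{0\}$ the orbit-dimension count does work: $\dim\bigl((H_i\cap\langle p\rangle^\perp)+(H_j\cap\langle p\rangle^\perp)\bigr)=(m-3)+(m-3)-(m-5)=m-1$ for generic $p$, because $\dim(H_i\cap H_j)=m-4\geq 1$. But for $m=4$ with three pairwise transversal $2$-planes the three tangent spaces $H_i\cap\langle p\rangle^\perp$ are mere lines, and nothing in your sketch rules out that they span only a $2$-dimensional space for \emph{every} $p$; you defer exactly this to ``the Montgomery--Samelson classification of transitive compact group actions on spheres.'' That classification is not the right tool: transitivity is the \emph{conclusion} you are after, not a hypothesis, so what you would actually need is a classification of the closed connected subgroups of $\mathrm{SO}(4)$ that can contain three plane-rotation circles with distinct, pairwise transversal planes and no common fixed line (one must exclude $T^2$, $\mathrm U(2)$, the two $\mathrm{SU}(2)$ factors, $\mathrm{SO}(3)$, and so on, case by case). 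This can be carried out and your claim is true, but it is real work that the proposal only names. The paper sidesteps all of it: it treats every $m\geq 4$ uniformly by an elementary estimate, choosing a codimension-$3$ subspace $L\subset H_1$ and two boundary points $p,q$ spanning a plane in $L^\perp$ transversal to $H_2^\perp,H_3^\perp$, and playing $\dim(H_p+H_q)\leq m-1$ against the orbit bound $\dim T_p(Gp)\leq m-2$; no Lie-theoretic classification enters. I would suggest either carrying out the $\mathrm{SO}(4)$ subgroup analysis in full or adopting the paper's dimension-count route.
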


\begin{proof}
Let $G$ be the affine symmetry group of $K$. Since by \cite[Lemma 2.2]{10.2140/gt.2021.25.2621} $G$ is affinely conjugated to a~subgroup of $\mathrm O(m,\mathbb R)$, without loss of generality we may assume that $G\subseteq\mathrm O(m,\mathbb R)$. In particular, each codimension $2$ hyperplane of affine revolution $H_i$ of $K$ gives rise to a~subgroup $G_i\subset G$ isomorphic to $\mathrm O(m-2,\mathbb R)$.\\

It turns out that the proof of \thref{lem:07} reduces to a~quite simple but tedious linear algebra problem. The key idea is the following: if the hyperplanes $H_i$ were pairwise transversal, then the orbit of a~generic point under action of $G$ would be of dimension $m-1$, which means that $\partial K$ would be a~sphere. Otherwise i.a. $G_2,G_3$ share a~common representation space $H_2+H_3$ of dimension $m-1$ and a~common fixed point subspace $H_2^\perp\cap H_3^\perp$ of dimension $1$, in which case the subgroup $\langle G_2,G_3\rangle\subseteq G$ generated by $G_2,G_3$ is by \thref{lem:02} isomorphic to $\mathrm O(m-1,\mathbb R)$.\\

Firstly we will show that $\dim H_2+H_3=m-1$, unless $\partial K$ is a~sphere. Let $p\in\partial K$ be any point on the boundary of $K$. Since $\partial K$ is invariant under $G$, we have $T_p(Gp)\subseteq T_p(\partial K)$, where $Gp$ is the orbit of $p$ under action of $G$. Now, if $\dim T_p(Gp)=m-1=\dim T_p(\partial K)$ for some $p\in\partial K$, then $\partial K$ is a~sphere and we are done. Hence we may assume that for every $p\in\partial K$ we have $\dim T_p(Gp)\leq m-2$. Observe that $T_p(G_ip)$ is a~codimension $3$ hyperplane parallel to $H_i\cap\langle p\rangle^\perp$, unless $H_i\subset\langle p\rangle^\perp$. Moreover $T_p(G_1p)+T_p(G_2p)+T_p(G_3p)\subseteq T_p(Gp)$. It follows that for every $p\in\partial K$ we have
\begin{equation}\label{eq:03}\dim H_1\cap\langle p\rangle^\perp+H_2\cap\langle p\rangle^\perp+H_3\cap\langle p\rangle^\perp\leq\dim T_p(Gp)\leq m-2.\end{equation}

Let $L$ be an arbitrary codimension $3$ hyperplane contained in $H_1$ but not in $H_2,H_3$. Then $L^\perp$ is a~subspace of dimension $3$ and $H_2^\perp\cap L^\perp,H_3^\perp\cap L^\perp$ are its subspaces of dimension at most $1$. Hence there exists a~plane $\pi$ contained in $L^\perp$ and transversal to $H_2^\perp,H_3^\perp$. Let $p,q\in\partial K$ be its basis. Observe that $H_1\cap\langle p\rangle^\perp=L=H_1\cap\langle q\rangle^\perp$, whereas $H_i\cap\langle p\rangle^\perp\neq H_i\cap\langle q\rangle^\perp$, $i=2,3$. Indeed, otherwise
$$3=\codim H_i\cap\langle p\rangle^\perp=\codim H_i\cap\langle p\rangle^\perp\cap\langle q\rangle^\perp=\codim H_i\cap\pi^\perp=\dim H_i^\perp+\pi=4,$$
a contradiction. Denote
$$H_p\colonequals L+H_2\cap\langle p\rangle^\perp+H_3\cap\langle p\rangle^\perp,\quad H_q\colonequals L+H_2\cap\langle q\rangle^\perp+H_3\cap\langle q\rangle^\perp.$$
Clearly, $H_i\cap\langle p\rangle^\perp\subsetneq H_i\cap\langle p\rangle^\perp+H_i\cap\langle q\rangle^\perp\subseteq H_i$, $i=2,3$, and since the dimension of the left-hand side and the right-hand side differs by one, the last inclusion must be in fact an equality. Thus
$$H_2+H_3=H_2\cap\langle p\rangle^\perp+H_2\cap\langle q\rangle^\perp+H_3\cap\langle p\rangle^\perp+H_3\cap\langle q\rangle^\perp\subseteq H_p+H_q.$$
Now, by \eqref{eq:03} we have $\dim H_p,\dim H_q\leq m-2$. Moreover, $\dim H_p\cap H_q\geq\dim L=m-3$, which implies
$$\dim H_p+H_q=\dim H_p+\dim H_q-\dim H_p\cap H_q\leq(m-2)+(m-2)-(m-3)=m-1.$$
Comparing the dimensions of the left-hand side and the right-hand side of $H_2\subsetneq H_2+H_3\subseteq H_p+H_q$ yields $\dim H_2+H_3=m-1$ and hence also $\dim H_2^\perp\cap H_3^\perp=\dim(H_2+H_3)^\perp=1$.\\

Finally, observe that $\mathbb R^m$ can be viewed as a~direct sum $(H_2+H_3)\oplus(H_2^\perp\cap H_3^\perp)$ of representation spaces of a~subgroup $\langle G_2,G_3\rangle\subseteq G$ generated by $G_2,G_3$. Indeed,
$$H_2+H_3=\bigcup_{v\in H_3}H_2+v=\bigcup_{v\in H_3}H_2+\mathrm{proj}_{H_2^\perp}(v)$$
is clearly invariant under $G_2$ and a~similar argument shows that it is also invariant under $G_3$. Moreover, both $G_2$ and $G_3$ act trivially on $H_2^\perp\cap H_3^\perp$. Now, we have
$$\mathrm{SO}(m-2,\mathbb R)\simeq(G_2)^0\vert_{H_2+H_3}\subsetneq\langle G_2,G_3\rangle^0\vert_{H_2+H_3}\subseteq\mathrm{SO}(m-1,\mathbb R),$$
and since $\mathrm{SO}(m-2,\mathbb R)$ is a~maximal connected subgroup of $\mathrm{SO}(m-1,\mathbb R)$ \cite[Lemma~4]{10.2307/1968975}, it follows that $\langle G_2,G_3\rangle^0\vert_{H_2+H_3}\simeq\mathrm{SO}(m-1,\mathbb R)$. Therefore $\langle G_2,G_3\rangle\simeq\mathrm O(m-1,\mathbb R)$, which concludes the proof.
\end{proof}

\begin{remark}
Note that \thref{lem:02} (without the superfluous symmetry assumption) reads: if the affine symmetry group $G$ of a~compact domain $K\subset\mathbb R^m$, $m\geq 3$, contains two different subgroups $H_1,H_2<G$ affinely conjugated to $\mathrm O(m-1,\mathbb R)$, it contains a~subgroup affinely conjugated to $\mathrm O(m,\mathbb R)$. Further, \thref{lem:07} reads: if the affine symmetry group $G$ of a~compact domain $K\subset\mathbb R^m$, $m\geq 4$, contains three different subgroups $H_1,H_2,H_3<G$ affinely conjugated to $\mathrm O(m-2,\mathbb R)$, it contains a~subgroup affinely conjugated to $\mathrm O(m-1,\mathbb R)$. But even if $G$ contains \emph{a priori} only two different subgroups $H_1,H_2<G$ affinely conjugated to $\mathrm O(m-2,\mathbb R)$, in general it should contain infinitely many of them, as the image of $H_1$ under any inner automorphism of $G$ arising from conjugation by $g\in H_2$ is again a~subgroup affinely conjugated to $\mathrm O(m-2,\mathbb R)$. Let us state then a~more general question:

\begin{question}
Does a~compact domain $K\subset\mathbb R^m$, $m\geq 3$, admitting $\lfloor m/(m-k)\rfloor+1$ different codimension $k$ hyperplanes of affine revolution, admit a~codimension $k-1$ hyperplane of affine revolution, $1<k<m$?
\end{question}

\noindent To the author's best knowledge, the answer is not known.
\end{remark}

Recall \thref{lem:09} which says that either $V=\mathrm{Gr}(n-2,T_pM^{n-1})$ or $q_f\equiv 0$. In the first case (i.e. $V=\mathrm{Gr}(n-2,T_pM^{n-1})$), we can repeat the geometric argument from \S\ref{sec:01} to show that actually the whole hypersurface $M^{n-1}$ is invariant under action of $\mathrm O(n-2,\mathbb R)$ with common fixed-point space $\aff(\{\langle\hat{\boldsymbol e}_{n-1}\rangle,O\})$. Hence by \thref{lem:07} all such points $p\in M^{n-1}$ lie on at most two different planes $\pi_1,\pi_2$, unless $M^{n-1}$ is a~body of affine revolution.\\

Finally, we pass to the second case (i.e. $q_f\equiv 0$). Let $p\in M^{n-1}$ be the point attaining the maximal Euclidean distance from the origin. It means that $M^{n-1}$ is contained in some sphere tangent to $M^{n-1}$ at $p$. In particular, the second fundamental form of $M^{n-1}$ at $p$ majorizes the second fundamental form of the sphere, and thus $M^{n-1}$ is strongly convex on some open neighborhood of $p$. Let $U\subseteq M^{n-1}$ be a~maximal open neighborhood of $p$ where the second fundamental form of $M^{n-1}$ is positive definite. We already know from \thref{lem:10} that the cubic form of $M^{n-1}$ vanishes identically on $U\setminus(\pi_1\cup\pi_2)$.

\begin{lemma}[{Maschke, Pick, Berwald \cite[Theorem~II.4.5]{nomizu1994affine}}]\thlabel{lem:06}
Let $f:M\to\mathbb R^{m+1}$, $m\geq 2$, be a~non-degenerate hypersurface with Blaschke structure. If the cubic form \eqref{eq:05} vanishes identically, then $f(M)$ is hyperquadric in $\mathbb R^{m+1}$.
\end{lemma}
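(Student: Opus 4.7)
The plan is to extract strong rigidity from the vanishing of $C$ by first upgrading the Blaschke connection $\nabla$ to a metric connection, then identifying the affine shape operator $S$ as a constant multiple of the identity, and finally integrating the structure equations to recognize $f(M)$ as a hyperquadric.

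The first step is essentially immediate from the paper's own setup: by the Codazzi equation for $h$, the trilinear form $C(X,Y,Z) = (\nabla_X h)(Y,Z)$ is totally symmetric in its three arguments, so $C \equiv 0$ is equivalent to $\nabla h \equiv 0$. Since $\nabla$ is torsion-free and preserves the non-degenerate form $h$, it must coincide with the Levi-Civita connection of $(M,h)$; in particular its curvature tensor $R$ satisfies the full (pseudo-)Riemannian symmetries.

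The second step uses the Gauss equation of the Blaschke structure, $R(X,Y)Z = h(Y,Z)SX - h(X,Z)SY$, together with the Ricci equation $h(SX,Y) = h(X,SY)$ (which for a Blaschke structure is automatic). Applying the pair symmetry $h(R(X,Y)Z,W) = h(R(Z,W)X,Y)$ to the Gauss expression and specializing to $Z = X$, $W = Y$ yields
\[
h(X,X)\,h(SY,Y) = h(Y,Y)\,h(SX,X),
\]
whence $h(SX,X) = \lambda\,h(X,X)$ for some scalar function $\lambda$ on $M$. Combined with the self-adjointness of $S$ and polarization, this upgrades to $S = \lambda\,\mathrm{id}$ pointwise. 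The Codazzi equation $(\nabla_X S)Y = (\nabla_Y S)X$ then reduces to $X(\lambda)Y = Y(\lambda)X$, and since $m \geq 2$ we may choose linearly independent $X,Y$, forcing $d\lambda \equiv 0$; thus $\lambda$ is a real constant.

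The third and final step integrates Weingarten's equation $D_X\xi = -\lambda f_*X$, which gives $D_X(\xi + \lambda f) = 0$; hence $\xi + \lambda f = c$ for some constant vector $c \in \mathbb R^{m+1}$. When $\lambda \neq 0$, an ambient translation brings us to the centro-affine setting $\xi = -\lambda f$, meaning every affine normal line passes through the origin; when $\lambda = 0$, $\xi$ is itself a nonzero constant transversal vector. Feeding this back into the Gauss formula $D_X f_*Y = f_*\nabla_X Y + h(X,Y)\xi$ and integrating against an appropriately chosen constant symmetric bilinear form on $\mathbb R^{m+1}$ shows that a specific quadratic polynomial in the ambient coordinates is constant along $f$, exhibiting $f(M)$ as a subset of a hyperquadric. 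The main obstacle is precisely this closing integration: producing the correct ambient quadratic form (whose type—ellipsoidal, hyperboloidal, or paraboloidal—depends on $\mathrm{sgn}(\lambda)$ and the signature of $h$) and verifying that $f(M)$ lies on its zero set amounts to a short case analysis together with a rigidity argument for affine immersions with prescribed Blaschke data, but introduces no essentially new conceptual input beyond what has been set up.
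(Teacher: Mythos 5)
The paper offers no proof of this lemma at all: it is quoted verbatim as the classical Maschke--Pick--Berwald theorem and cited to Nomizu--Sasaki \cite[Theorem~II.4.5]{nomizu1994affine}, so there is nothing internal to compare against. Your outline is, in substance, the standard proof from that reference: $\nabla h=0$ forces $\nabla$ to be the Levi-Civita connection of $h$, the curvature symmetries combined with the Gauss equation force $S=\lambda\,\mathrm{id}$, Codazzi for $S$ makes $\lambda$ constant (so $M$ is an affine hypersphere), and one then integrates the structure equations to land on a quadric. Two points need tightening. First, the specialization $Z=X$, $W=Y$ of the pair symmetry $h(R(X,Y)Z,W)=h(R(Z,W)X,Y)$ is vacuous (both sides become $h(R(X,Y)X,Y)$); to obtain $h(X,X)\,h(SY,Y)=h(Y,Y)\,h(SX,X)$ you should instead take $Z=Y$, $W=X$, or equivalently use the skew-symmetry of $R$ in its last two slots (valid because $\nabla h=0$), expand via the Gauss equation, and cancel the cross terms using $h(SX,Y)=h(X,SY)$; one should also note that the resulting identity determines $\lambda$ only off the null cone of $h$, which is harmless here since in the paper's application $h$ is definite. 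Second, the ``closing integration'' that you defer is where the quadric actually appears, and it needs no rigidity argument: for $\lambda\neq 0$ translate so that $\xi=-\lambda f$ and check directly, using $\nabla h=0$ together with the Gauss and Weingarten formulas, that the bilinear form $G$ defined along $f$ by $G(f_*X,f_*Y)=\lambda h(X,Y)$, $G(f_*X,f)=0$, $G(f,f)=1$ is $D$-parallel, hence constant on $\mathbb R^{m+1}$, so that $f(M)$ lies on the nondegenerate quadric $\{G(x,x)=1\}$; for $\lambda=0$ the Gauss equation gives $R=0$, so in $\nabla$-affine coordinates $h_{ij}$ is constant and $\partial_i\partial_jf=h_{ij}\xi$ integrates to a paraboloid. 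With those two repairs the argument is complete and coincides with the cited one.
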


It follows from \thref{lem:06} that $U$ is contained in some hyperquadric $Q^{n-1}$. Now, suppose that $\partial U$ is non-empty and let $p\in\partial U$. Since $Q^{n-1}$ is locally strongly convex, the second fundamental form of $Q^{n-1}$ at $p$ is positive definite. However, the second fundamental form of $M^{n-1}$ at $p$ is equal to the latter and thus it is also positive definite on some open neighborhood of $p$, which contradicts the definition of $U$. It follows that $U=M^{n-1}$, which concludes the proof.
\end{proof}

\begin{remark}\thlabel{rem:01}
In our proof, we used the additional assumption that $K$ is origin-symmetric only to know that all the axes of affine revolution pass through some fixed point, which implies some nice geometric structure of $M^{n-1}$, determined by its series expansion of order $3$. This significantly simplified our argument, which after all required no algebraic computations. Nevertheless, there are e.g. certain partial differential equations of order $5$, satisfied whenever $g$ is a~local parametrization of a~surface of affine revolution. When applied to $f\vert_\pi$ for every plane $\pi\in\mathrm{Gr}(2,T_pM^3)$, they would yield a~system of polynomial equations in partial derivatives of $f$. However, it is beyond the scope of human to obtain, not to mention to solve. Therefore any approach along those lines would badly need the assistance of a~supercomputer.
\end{remark}

\section*{Acknowledgments}

I would like to thank Prof. D.~Ryabogin for giving me this problem to consider and for many inspiring discussions, my doctoral advisor Prof. M.~Wojciechowski for his help in completing all the details and the exceptional amount of effort spent on editorial work, Prof. A.~Weber for helping me understand the algebraic topology behind the proof, Prof. J.~Wiśniewski for bringing the theorem of Bertini to my attention and Prof. T.~Mostowski for a~brief introduction to affine differential geometry.

\bibliography{references}{}
\bibliographystyle{amsplain}

\end{document}